\definecolor{Blue}{rgb}{0.3,0.3,0.9}
\newtheorem{thm}{Theorem}[section]
\newtheorem{cor}[thm]{Corollary}
\newtheorem{prop}[thm]{Proposition}
\newtheorem{ex}[thm]{Example}
\newtheorem{rem}[thm]{Remark}
\newcommand{\commentout}[1]{}
\newcommand{\nwc}{\newcommand}
\newcommand{\kA}{\widetilde{A}}
\nwc{\bR}{\mb R}
\nwc{\bH}{{\mb H}}
\nwc{\bxp}{{{\mathbf x}}}
\nwc{\bap}{{{\mathbf y}}}
\nwc{\bPhi}{\mathbf{\Phi}}
\nwc{\bPsi}{\mathbf{\Psi}}
\nwc{\bh}{\mathbf h}
\nwc{\bI}{\mathbf I}
\nwc{\bP}{\mathbf P}
\nwc{\bs}{\mathbf s}
\nwc{\bd}{\mathbf{d}}
\nwc{\bX}{\mathbf X}
\nwc{\om}{\beta'}
\nwc{\nwt}{\newtheorem}
\nwc{\xp}{{x^{\perp}}}
\nwc{\yp}{{y^{\perp}}}
\nwc{\ba}{{\mb a}}
\nwc{\bal}{\begin{align}}
\nwc{\ben}{\begin{equation*}}
\nwc{\beqq}{\begin{equation}}
\nwc{\bea}{\begin{eqnarray}}
\nwc{\beq}{\begin{eqnarray}}
\nwc{\bean}{\begin{eqnarray*}}
\nwc{\beqn}{\begin{eqnarray*}}
\nwc{\beqast}{\begin{eqnarray*}}
\nwc{\eal}{\end{align}}
\nwc{\een}{\end{equation*}}
\nwc{\eeqq}{\end{equation}}
\nwc{\eea}{\end{eqnarray}}
\nwc{\eeq}{\end{eqnarray}}
\nwc{\eean}{\end{eqnarray*}}
\nwc{\eeqn}{\end{eqnarray*}}
\nwc{\eeqast}{\end{eqnarray*}}
\nwc{\vep}{\varepsilon}
\nwc{\ep}{\epsilon}
\nwc{\ept}{\epsilon}
\nwc{\vrho}{\varrho}
\nwc{\orho}{\bar\varrho}
\nwc{\ou}{\bar u}
\nwc{\vpsi}{\varpsi}
\nwc{\lamb}{\lambda}
\nwc{\Var}{{\rm Var}}
\nwc{\nn}{\nonumber}
\nwc{\mf}{\mathbf}
\nwc{\mb}{\mathbf}
\nwc{\ml}{\mathcal}
\nwc{\IA}{\mathbb{A}} %algebraic
\nwc{\bi}{\mathbf i}
\nwc{\bo}{\mathbf o}
\nwc{\IB}{\mathbb{B}}
\nwc{\IC}{\mathbb{C}} %complex
\nwc{\ID}{\mathbb{D}} %Dedekind
\nwc{\IM}{\mathbb{M}} %Dedekind
\nwc{\IP}{\mathbb{P}} %Dedekind
\nwc{\II}{\mathbb{I}} %Dedekind
\nwc{\IE}{\mathbb{E}} %Euklides
\nwc{\IF}{\mathbb{F}} %finite field
\nwc{\IG}{\mathbb{G}} %Gauss
\nwc{\IN}{\mathbb{N}} %natural
\nwc{\IQ}{\mathbb{Q}} %rational
\nwc{\IR}{\mathbb{R}} %real
\nwc{\IT}{\mathbb{T}} %torus
\nwc{\IZ}{\mathbb{Z}} %integers
\nwc{\cE}{{\ml E}}
\nwc{\cP}{{\ml P}}
\nwc{\cQ}{{\ml Q}}
\nwc{\cL}{{\ml L}}
\nwc{\cX}{{\ml X}}
\nwc{\cW}{{\ml W}}
\nwc{\cZ}{{\ml Z}}
\nwc{\cR}{{\ml R}}
\nwc{ \cV}{{\ml V}}
\nwc{\cT}{{\ml T}}
\nwc{\crV}{{\ml L}_{(\delta,\rho)}}
\nwc{\cC}{{\ml C}}
\nwc{\cO}{{\ml O}}
\nwc{\cA}{{\ml A}}
\nwc{\cK}{{\ml K}}
\nwc{\cB}{{\ml B}}
\nwc{\cD}{{\ml D}}
\nwc{\cF}{{\ml F}}
\nwc{\cS}{{\ml S}}
\nwc{\cM}{{\ml M}}
\nwc{\cN}{{\ml N}}
\nwc{\cG}{{\ml G}}
\nwc{\cH}{{\ml H}}
\nwc{\bk}{{\mb k}}
\nwc{\bn}{{\mb n}}
\nwc{\cbz}{\overline{\cB}_z}
\nwc{\supp}{{\hbox{supp}}}
\nwc{\fR}{\Re}
\nwc{\bY}{\mathbf Y}
\nwc{\pft}{\cF^{-1}_2}
\nwc{\bU}{{\mb U}}
\nwc{\bG}{{\mb G}}
\nwc{\gd}{\textrm{grad }}
\nwc{\Hs}{\textrm{Hess  }}
\nwc{\sy}{\textrm{sym  }}
\nwc{\Pj}{\textrm{Proj  }}
\nwc{\bg}{\mathbf{g}}
\nwc{\mbf}{\mathbf{f}}
\nwc{\mbg}{\mathbf{g}}
\nwc{\mbh}{\mathbf{h}}
\nwc{\mbm}{\mathbf{m}}
\nwc{\mbk}{\mathbf{k}}
\nwc{\mbs}{\mathbf{s}}
\nwc{\mbe}{\mathbf{e}}
\nwc{\be}{\mathbf{e}}
\nwc{\Om}{\beta'}
\nwc{\ind}{\operatorname{I}}
\nwc{\mbx}{\mathbf{f}}
\nwc{\bb}{\mathbf{g}}
\nwc{\xmax}{f_{\rm max}}
\nwc{\xmin}{f_{\rm min}}
\nwc{\suppx}{\hbox{\rm supp} (\mbf)}
\nwc{\by}{\mathbf{h}}
\nwc{\bZ}{\mathbf{Z}}
\nwc{\bF}{\mathbf{F}}
\nwc{\bE}{\mathbf{E}}
\nwc{\bV}{\mathbf{V}}
\nwc{\cI}{\IZ^2_N}
\nwc{\chis}{{\chi^{\rm s}}}
\nwc{\chii}{{\chi^{\rm i}}}
\nwc{\pdfi}{{f^{\rm i}}}
\nwc{\pdfs}{{f^{\rm s}}}
\nwc{\pdfii}{{f_1^{\rm i}}}
\nwc{\pdfsi}{{f_1^{\rm s}}}
\nwc{\thetatil}{{\tilde\theta}}
\nwc{\red}{\color{red}}
\nwc{\prox}{\hbox{prox}} 
\nwc{\diag}{\hbox{\rm diag}}
\nwc{\sloc}{J_{\rm f}}
\nwc{\bu}{\xi}
\nwc{\bv}{\beta}
\nwc{\cU}{\mathcal{U}}
\nwc{\bN}{\mathbf{N}}
\nwc{\bw}{\mathbf{w}}
\nwc{\im}{i}
\nwc{\bom}{\mathbf{w}}
\nwc{\bt}{\mathbf{t}}
\nwc{\z}{y}
\nwc{\cY}{\mathcal{Y}}
\nwc{\wLam}{{\widetilde \Lambda}}
\title{Sequential subspace methods on  Stiefel manifold optimization \thanks{
%\funding{
%\color{red} 
Funding: The research  of the  author is supported  by grant 113-2115-M-007-015-
MY3 and 114-2112-M-029-008 from the Ministry of Science and Technology, Taiwan.
}
}
\author[1]{ Pengwen Chen\thanks{
%\funding{
%\color{red} 
 pengwen@math.nthu.edu.tw.
}
}
\author[2]{Chung-Kuan Cheng}
\author[3]{Chester Holtz}
\affil[1]{Department of Mathematics, National Tsing Hua  University, Hsinchu,  Taiwan}
\affil[2]{CSE and $^\ddag$ECE Departments, UC San Diego, La Jolla, CA, USA }
\affil[3]{CSE Department, UC San Diego, La Jolla, CA, USA }
\begin{document}

\maketitle
\begin{abstract} 

We investigate  the minimization of a quadratic function over Stiefel manifolds (the set of all orthogonal $r$-frames in $\IR^n$), which has applications in high-dimensional semi-supervised classification tasks. To reduce the computational complexity, we employ sequential subspace methods (SSM) to reduce the high-dimensional problem to a sequence of low-dimensional subproblems. In this paper, our goal is to achieve an optimal solution of high quality, referred to as  a “qualified critical point". Qualified critical points are defined as those where  the associated multiplier matrix meets specific upper-bound conditions. These points exhibit near-global optimality in quadratic optimization problems. 

In the context of a general quadratic, SSM generates a sequence of “qualified critical points” through low-dimensional “surrogate regularized models”. The convergence to a qualified critical point is guaranteed when each SSM subspace is constructed from the following vectors: (i) a set of orthogonal unit vectors associated with the current iterate, (ii) a set of vectors representing the gradient of the objective, and (iii) a set of eigenvectors associated with the smallest $r$ eigenvalues of the system matrix. Furthermore, incorporating Newton direction vectors into the subspaces can significantly accelerate the convergence of SSM.
\end{abstract}

{\bf Keywords:} Procrustes problem, Stiefel manifold, Sequential subspace methods, Trust region methods. 

\section{Introduction}

Optimization problems on smooth manifolds are common in science and engineering
due to  their natural geometric properties,  and they have a range of applications in fields such as machine learning, computer vision, robotics, scientific computing,  and signal processing~\cite{edelman_geometry_1998,  absil_projection-like_2012, absil_optimization_2008, Michor2007, bernard2021sparse, Huang2025}. 
 Let $I_r\in \IR^{r\times r}$ denote the identity matrix of dimension $r$.  For $n\ge r$, let $St(n,r)$ denote the 
 Stiefel manifold, defined as \beqq
St(n,r)
:=\{X\in \IR^{n\times r}: X^\top X=I_r\}.
\eeqq
In brief,  $St(n,r)$ is the set of matrices in $\IR^{n\times r}$ whose columns are orthonormal in $\IR^n$ with respect to 
the inner product $ \langle x,y\rangle=tr(x^\top y)$.
In this paper, we employ a subspace method to  solve  the problem
\begin{equation}\label{main_P}
    \min_{X \in St(n, r)} \{\mbf(X) = \frac{1}{2}tr(X^\top A X C) - tr(B^\top X)\},
\end{equation}
where $A$ is symmetric, $C$ is symmetric  positive definite, and $B\in \IR^{n\times r}$. 
For a symmetric matrix $A\in \IR^{n\times n}$,
perform    an eigendecomposition of $A$,
 \beqq
A=[v_1, v_2, \ldots, v_n]\diag(d_1, \ldots, d_n)[v_1, v_2, \ldots, v_n]^\top.
\eeqq
Assume the spectral gap $d_{r+1}>d_r$ in $A$.
Let $V_g$ denote the matrix whose columns are the ground eigenvectors $\{v_1,\ldots, v_r\}$.
When $C=I_r$, the problem in  (\ref{main_P}) can be simplified to 
the (unbalanced) Procrustes problem\cite{Schnemann1966AGS,Elden1999,Zhang2007}. 
This task can also  be regarded as a relaxation of one NP-hard quadratic assignment problem (Koopmans-Beckman problem) \cite{QAP}, where a feasible  discrete solution can be characterized as a point on the Stiefel manifold. 
Our primary aim is to develop an algorithm that computes a global minimizer of (\ref{main_P}). 
The nonconvexity introduced by the orthogonality constraints significantly complicates the task.

 Although   quadratic functions are relatively  simple, large-scale quadratic optimization with $n\gg r$, coupled with orthogonality constraints, represents a crucial category of matrix optimization challenges that have emerged  in various fields, including machine learning, statistics, and signal processing.
  For instance, the problem~(\ref{main_P})
with $C=I_r$ and $B=0$ is the core task in 
 Principal Component Analysis (PCA), a standard tool for dimensionality reduction and visualization~\cite{doi:10.1098/rsta.2015.0202, pmlr-v37-shamir15}.
   Specific instances of \eqref{main_P} also appear in  a widely used technique in signal processing,
     independent component analysis \cite{Comon1994,Nishimcri,Ablin2017, Ablin2023},
          where the search for a demixing (uncorrelated) matrix is reduced to the separation of orthogonal signals under proper statistical principles. 
  More recently,  in the realm of deep learning, the weights of a layer are often parametrized by an orthogonal matrix to overcome the difficulty in training deep networks\cite{Arjovsky2016,Bansal2018}.

The matrix $A$ mentioned in (\ref{main_P}) is derived from the graph Laplacian associated with a data set. The 
 eigenvectors, referred to as  ``ground eigenvectors'', corresponding to  the smallest eigenvalues,   reveal 
the inherent geometric properties of the graph, e.g., \cite{ng2001spectral,Belkin2003}.
The  Laplacian Eigenmap framework~\cite{Belkin2003}  computes the spectrum of a matrix derived from an underlying data graph to identify various characterizations of the data,  including   its cluster structure and geometric characteristics. 
A large body of subsequent work,  including \cite{Coifman2005}, uses the eigenvectors of the graph Laplacian for dimensionality reduction and data representation, in both unsupervised and semi-supervised contexts \cite{Tenenbaum2000, Zhou2003, Belkin2003, Belkin2004, Belkin2006, NIPS_Chester}. 
This paper focuses on a prominent application of the task in \eqref{main_P} within the context of graph machine learning, namely, semi-supervised graph embedding. 
This approach is significant because 
collecting labeled data can be costly and time-consuming, while   a wealth of unlabeled data is often readily  available. 
In semi-supervised graph embedding, 
our goal is to gain insights into the clustering structure by leveraging 
 additional label information, even when    only a small amount of pre-specified labeled data is provided.  
  The minimization problem in (\ref{main_P}) serves as a basic  model for  $r$-way classification with partial labeling. (See  section~\ref{sec4.2} for the details.) Unlike the  eigenvectors discussed in~\cite{Belkin2003}, the minimizer $X$ in the context is interpreted as a  locally biased perturbation of the graph Laplacian eigenspace, which is influenced by the  perturbation term $tr( B^\top X)$ in  (\ref{main_P}).

%The subspace-minization methods have been used in trust region methods. For instance, 
%the 
 When the problem dimension $n$ is moderate, various  algorithms already available  in the literature
 can be used to solve 
 the constrained optimization problem in (\ref{main_P}). 
  For instance,  the 
 gradient projection method is an effective method, especially suitable for  projections on simple constrained sets~\cite{Goldstein1964, Levitin1966, Bertsekas1976, bertsekas_nonlinear_2016}.   
From the perspective of manifold optimization, 
 the Stiefel manifold in (\ref{main_P})  is a smooth manifold with special quadratic constraints. In this context, 
 one can implement conjugate gradient methods or Newton methods over geodesic paths on the manifold\cite{edelman_geometry_1998}. 
The computational complexity of geodesics can be reduced by using a  line-search procedure on tangent spaces with proper retractions\cite{Manton2002,absil_optimization_2008
} or by using the Cayley transform to construct a feasible curve on the manifold\cite{Wen2013}. When applying  
 the inverse Hessian is costly, a quasi-Newton method for  Riemannian optimization
 can be utilized,  as demonstrated in \cite{doi:10.1137/18M121112X, doi:10.1137/140955483}.  Riemannian optimization methods have been implemented in
the software package Manopt\cite{manopt
}.

 This paper focuses on applying subspace methods to address the large-scale problems in (\ref{main_P}) with  $n\gg r$.   
  For $r=1$,
the  task involves minimizing 
 a quadratic function  over a unit sphere--a challenge encountered in trust region methods~\cite{sorensen_newtons_1982}\cite{conn_trust_2000}.  The
authors of \cite{hager_global_2005} introduce  Sequential Subspace Methods (SSM) as a means  to compute a global minimizer for  large-scale problems.
 {  Subspace techniques have been widely employed in numerical linear algebra, as they enable the next iteration to be generated within a low-dimensional subspace~\cite{doi:10.1142/9789812709356_0012}. } This approach reduces the computational cost of each iteration by solving associated low-dimensional subproblems, offering an effective solution to large-scale optimization challenges
 \cite{CSIAM-AM-2-4}.
  We extend the application of Sequential Subspace Methods (SSM) to optimization problems on the Stiefel manifold where $ r > 1 $. When the problem dimension $ n $ is large, it is common to encounter multiple local solutions, most of which are saddle points that are distant from the global minimizers. To ensure solution quality, we solve a sequence of regularized problems, and their global minimizers can be easily identified by computing the qualified critical points. 
 
%In this work, we aim to provide a theoretical and algorithmic framework for solving  (\ref{main_P}). 
While our discussion primarily focuses on quadratic functions for clustering applications, the results presented in this paper can also offer insights into minimizing general objectives over Stiefel manifolds. Below, we outline the key contributions of this paper.

%\subsection{Contribution}
\begin{itemize}
\item 
We analyze the optimality conditions  in~(\ref{main_P}) by examining the optimization over  the convex relaxation of $St(n,r)$. To our knowledge, there is no established systematic method for finding  the global minimizer  in~(\ref{main_P}), although some sufficient conditions where $ C = I_r $  have been reported in [EP99, ZQD07]. As  candidates for global minimizers, we introduce the concept of { ``}qualified critical points". These  points satisfy the first-order optimal condition, and the associated multiplier matrix $ \Lambda $ fulfills  $ \Lambda \preceq d_r C $. For a system matrix $A$ with identical ground eigenvalues, any qualified critical point  in~(\ref{main_P}) is guaranteed to be  a global minimizer.

%More generally, if all singular values of   $V_g^\top BC^{-1}$  exceed the gap
% $d_r-d_1$, then any qualified critical point will also be a global minimizer. Refer to Proposition~\ref{safe} for further details.

\item 
%SSM  minimize a sequence of surrogate regularized functions in (\ref{eq_68'}) to reach one qualified critical point of (\ref{main_P}). 
SSM constructs a sequence of subspaces and solves the corresponding low-dimensional subproblems.
When the subspaces   are  spanned by columns of $X$, $AXC-B$, and ground eigenvectors of $A$,
 subproblem solutions converge to a qualified critical point of (\ref{main_P}). 
To achieve fast convergence, we can incorporate a Newton direction within the subspace. However, calculating Newton steps is often the most time-consuming aspect of this process. To address this issue, we impose orthogonality against the ground eigenvectors during the computation of the Newton step. This method improves the conditioning of the Hessian operator and reduces the total number of conjugate gradient (CG) iterations needed.
 
\item
In applications, there is a need to address a “locally biased” problem, such as finding a partition or clustering that is guided by a predefined "ground truth" partition of a seed set of vertices. These locally biased problems can be particularly challenging for popular eigenvector-based machine learning and data analysis tools. 
We demonstrate the effectiveness of the proposed SSM  on various artificial test cases. The quality of the local solutions obtained for these problems directly impacts classification accuracy. Numerical studies indicate that the SSM achieves high-quality clusters, both in terms of balanced graph cut metrics and the accuracy of labeling assignments across several real-world datasets. These improvements are consistent with SSM attaining lower objective values and smaller first-order residuals.
%
% In many applications, there is a need to address a "locally-biased" problem, such as finding a partitioning or clustering that is guided by a predefined "ground-truth" partition of a seed set of vertices. These locally-biased problems can be particularly challenging for popular eigenvector-based machine learning and data analysis tools. We demonstrate the effectiveness of the proposed SSM on various artificial test cases. 
%The quality of the local solutions obtained for these  problems has an impact on  the classification accuracy.
%. Numerical studies show that the SSM achieves high-quality clusters, both in terms of balanced graph cut metrics and the accuracy of labeling assignments across several real-world datasets, all thanks to the quality of the solutions provided.
%We include a comparison between SSM, the general first-order Riemannian Gradient (RG), and the second-order Trust region method.
%quality Locally-biased problems of this sort are particularly challenging for popular eigenvector-based machine learning and data analysis tools.

 \end{itemize}

 The paper is organized as follows.  In section \ref{sec:2},  we list some known results in the spherical case, i.e., $St(n,1)$. Next, we introduce qualified critical points and study one convex relaxation of (\ref{main_P}), which motivates the definition of regularized problems.   In section~\ref{sec:3}, we describe each ingredient of subspaces in SSM and show the convergence property. Finally, we demonstrate numerical results, which validate the effectiveness of the proposed algorithms in solving one $r$-way classification problem, described in section~\ref{sec4.2}.

%\subsubsection{General notions}
Throughout this paper,  we denote $r$ as a positive integer that is significantly less  than $n$.
We use $\IR^{n}$ to represent  the $n$-dimensional real vector space. The inner product 
$\langle A,B\rangle$ is defined as the trace of the matrix $AB^\top$ for $A,B$ of the same size. 
Let  $\|X\|$ denote the Frobenius norm for a matrix $X$. Let $I$ denote the identity matrix, and $I_n$ denote the identity matrix with size $n\times n$.
Consider the Riemannian metric, inner product on $St(n,r)$,
\beqq
\langle u,v\rangle=tr(u^\top v),\;\textrm{ for any } u,v\in St(n,r).
\eeqq 
Let $\Lambda_{sym}$ denote the symmetric matrix $(\Lambda+\Lambda^\top )/2$ for a square matrix $\Lambda$.  Let $\diag([x_1,\ldots, x_n])$ denote the diagonal matrix with $\{x_1,\ldots, x_n\}$ lying on the main diagonal. 
Let $\cO_r$ be the orthogonal group, i.e., $Q\in \cO_r$ if and only if $Q\in \IR^{r\times r}$ and $Q^\top Q=I_r$. Let $e_i$  denote the canonical basis vector $[0,\ldots, 1,0,\ldots]$, whose entries are all $0$, except for the $i$-th entry being $1$. Let $1_n$ denote the $n$-dimensional column vector with entries all ones.

%
%
%When  the ground eigenvectors of $A$ are precomputed,  
%SSM introduce   a sequence of small dimensional subspaces $\{\cS_k: k=1,\ldots\}$ to  compute 
%  the global minimizers $X_k$ of the induced  \textrm{regularized} problems,
%\beqq
%\min_X \left\{ \mbf(X; \kA, B_k,C):=\frac{1}{2}\langle X, \kA XC\rangle-\langle B_k, X\rangle \right\}
%\textrm{ subject to } X\in St(n,r)\cap \cS_k.
%\label{eq_68'}
%\eeqq
%Here, each $B_k\in \IR^{n\times r}$ is properly constructed to ensure that each regularized problem is a tight surrogate model for (\ref{main_P}). 
% has the same set of eigenvectors as $A$, but has  the first $r$ equal eigenvalues 
%$d_1'= d_2'=\ldots =d_r'\le\ldots \le d_n'$. Precisely, 
% write eigenvector decompositions of  $A$ and $A'$, respectively,
%Note that   $d_j'=d_j$ for all $j> r$ and $d_1'=d_2'=\ldots=d_r'={d_r}$.
%For simplicity, we study   the $\IR^2$-placement, i.e., the case $r=2$. 
% The initialization scheme  empirically improves the final placement.

\section{Local optimality conditions}\label{sec:2}
%
% Define Stiefel manifolds $St(n,r)$ as \beqq
%St(n,r)
%:=\{X\in \IR^{n\times r}: X^\top X=I_r\}.
%\eeqq
%In this paper, we are interested in the minimization of some smooth function $\mbf$ over one Steifel manifold.
% which has applications in providing initializations of good quality for nonconvex optimization. 
% Minimization over a Stiefel manifold can be regarded as a generalization of the well-known nonconvex quadratic over the unit ball or sphere, i.e., in~(\ref{eq_1d'}), arising in trust region methods.
 
\subsection{Preliminaries: Quadratic minimization over a sphere} 
%\subsubsection{Spherical case}
Consider the application of the trust region method to the minimization \beqq
\min_z f(z), \; z\in \IR^n.\eeqq
Let $\Delta>0$ be the trust-region radius. 
 The trust region method updates the current iterate $z=z_k$ to $z_{k+1}=z_k+x$, where the update vector $x\in \IR^n$ solves  a quadratic minimization subproblem~\cite{sorensen_newtons_1982}\cite{conn_trust_2000}\cite{hager_global_2005}\cite{nocedal_numerical_2006},
\beqq\label{eq_1d'}
\min_x \{ \frac{x^\top A x}{2}-\langle x, b\rangle: \|x\|\le \Delta, x\in \IR^n\},
\eeqq
 the vector $b\in \IR^n$ is the differential $\partial f$ at the current iterate $z=z_k\in\IR^n$,  and   the symmetric  matrix $A$ is the Hessian matrix $\partial^2 f$. Let $\lambda\le 0$ be the Lagrangian multiplier associated with the constraint $\|x\|^2-\Delta^2\le 0$. 
{ The first order necessary condition for a critical point $x$  is 
 \beqq
( A-\lambda I) x=b\; \textrm{ with } \;  \lambda(\|x\|^2-\Delta^2)=0.
 \eeqq
For  a minimizer $x$, 
  the associated multiplier $\lambda$ must be bounded above  by   the smallest eigenvalue $d_1$  of $A$(Lemma 2.4\cite{sorensen_newtons_1982}). 
 To proceed, we concentrate on the sphere case,
\beqq\label{eq_1s'}
\min_x \{ \frac{x^\top A x}{2}-\langle x, b\rangle: \|x\|=1, x\in \IR^n\}.
\eeqq
where we take  unit radius $\Delta=1$ for simplicity.
The following  propositions indicate that  the global solution in (\ref{eq_1s'}) 
is the critical point associated  with   $\lambda\le d_1$.\footnote{The proof was given in  Lemma 2.8(ii) \cite{sorensen_newtons_1982}.}
Note that the problem in (\ref{eq_1s'})  is exactly the special case of (\ref{main_P}) with $r=1$.}

\begin{prop}{( Lemma 2,1 \cite{hager_minimizing_2001})}\label{1.3}
A vector $x\in \IR^n$ is a global solution of (\ref{eq_1s'}), if and only if $\|x\|=1$
and \beqq \label{eq23'}
A-\lambda I\succeq 0, (A-\lambda I)x=b
\eeqq 
holds for some $\lambda\in \IR$. 
\end{prop}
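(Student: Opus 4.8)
The plan is to base the whole argument on one completing-the-square identity and then extract both directions from it. Abbreviating $q(x) := x^\top A x - 2\langle x,b\rangle$ and substituting the stationarity relation $b = (A-\lambda I)x$, a direct expansion yields, for every $y \in \IR^n$,
\[
q(y) - q(x) = (y-x)^\top (A-\lambda I)(y-x) + \lambda\big(\|y\|^2 - \|x\|^2\big).
\]
I would establish this identity first. The natural constraint set here is the unit sphere $\|x\|=1$ (the $r=1$ instance of \eqref{main_P}), and the key structural observation is that whenever the competitor $y$ also satisfies $\|y\|=1$ the residual term vanishes, leaving $q(y)-q(x)=(y-x)^\top(A-\lambda I)(y-x)$. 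Everything then reduces to controlling the quadratic form of $A-\lambda I$.

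For sufficiency, assume $\|x\|=1$, $A-\lambda I \succeq 0$, and $(A-\lambda I)x = b$. For any $y$ with $\|y\|=1$ the identity collapses to $q(y)-q(x)=(y-x)^\top(A-\lambda I)(y-x)\ge 0$, so $x$ is globally optimal on the sphere. (If one instead works with the ball $\|x\|\le\Delta$, interior competitors are covered by the sign of the active multiplier appearing in the residual term $\lambda(\|y\|^2-\|x\|^2)$.) This direction is immediate once the identity is available.

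For necessity, I would first produce $\lambda$ from the first-order condition: since $\|x\|=1$ the constraint gradient $2x$ is nonzero, so the Lagrange condition forces $\nabla q(x)=2(Ax-b)$ to be normal to the sphere, i.e.\ $(A-\lambda I)x=b$ for some $\lambda\in\IR$. The semidefiniteness $A-\lambda I\succeq 0$ is the heart of the matter and is where \emph{global} optimality must enter. I would invert the identity: for any unit vector $u$ with $\langle u,x\rangle\ne 0$, the reflected point $y = x - 2\langle u,x\rangle\,u$ again has $\|y\|=1$ and satisfies $y-x = -2\langle u,x\rangle\,u \parallel u$, so global optimality $q(y)\ge q(x)$ gives $4\langle u,x\rangle^2\,u^\top(A-\lambda I)u\ge 0$, hence $u^\top(A-\lambda I)u\ge 0$. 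Since the directions with $\langle u,x\rangle\ne 0$ are dense in the sphere, continuity extends this to all $u$, yielding $A-\lambda I\succeq 0$ on $\IR^n$.

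The step I expect to be the main obstacle is precisely this promotion from a tangent-space condition to a full-space one: a purely local analysis only delivers semidefiniteness of $A-\lambda I$ restricted to $x^\perp$, and it is the use of far-away boundary competitors $y$ (rather than infinitesimal perturbations of $x$) that converts global minimality into positivity along every direction. I would also check the degenerate ``hard case'' $\lambda=d_1$ with $b$ orthogonal to the ground eigenspace $V_g$, confirming that the reflected-point construction and the density argument survive there and that such a $\lambda$ indeed satisfies $A-\lambda I\succeq 0$.
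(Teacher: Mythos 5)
Your proof is correct. Note, however, that the paper contains no proof of Proposition~\ref{1.3} at all: it is quoted from Sorensen's trust-region work, so there is no internal argument to compare against. Your route --- the completing-the-square identity $q(y)-q(x)=(y-x)^\top (A-\lambda I)(y-x)+\lambda\bigl(\|y\|^2-\|x\|^2\bigr)$ for sufficiency, and the reflections $y=x-2\langle u,x\rangle u$ combined with a density/continuity argument for necessity --- is essentially the classical argument for this result, and the reflection step correctly handles the point you flag as the crux: far-away competitors on the sphere are what upgrade semidefiniteness of $A-\lambda I$ from the tangent space $x^{\perp}$ to all of $\IR^n$. The identity itself checks out, the Lagrange step producing $\lambda$ is justified (the constraint gradient $2x$ is nonzero on the sphere), and the degenerate case $\lambda=d_1$ requires no separate treatment since the reflection argument never divides by $d_1-\lambda$.

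One point deserves care. As written, problem (\ref{eq_1d'}) carries the inequality constraint $\|x\|\le\Delta$, whereas the proposition asserts $\|x\|=1$ with a sign-free $\lambda$; these are compatible only under the sphere reading of the constraint (equivalently $\Delta=1$ with the solution on the boundary). Your proof establishes the sphere version, which matches the literal statement, and that is the right call. But your parenthetical about the ball is not sufficient as stated: for the genuine ball problem the ``only if'' direction of the proposition fails outright (take $A\succ 0$, $b=0$; the global solution is $x=0$, which has $\|x\|\ne 1$), and the correct ball characterization requires $\lambda\le 0$ together with complementary slackness $\lambda(\|x\|^2-1)=0$, neither of which appears in the paper's statement. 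So the caveat should be that the ball version is a \emph{different} theorem, not that interior competitors are ``covered'' by the same argument.
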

 
\begin{prop}( { Lemma 2.2, \cite{hager_minimizing_2001}})\label{1.4}  Consider the eigenvector decomposition \beqq
A=V_A \diag([d_1, d_2, \ldots, d_n]) V_A^\top,\; V_A=[v_1, \ldots, v_n].\eeqq
Let $V_g$ be the matrix whose columns are  the ``ground eigenvectors", i.e.,   $\{v_j: d_j=d_1\}$ of $A$. Then a solution $x$ can be determined in the following way.
\begin{itemize}
\item Degenerate case: Suppose \beqq\label{1dc}
V_g^\top b=0 \textrm{ and } 
c:=\|(A-d_1 I)^{\dagger} b\|\le 1.\eeqq
 Then $\lambda=d_1$ and \beqq 
x=(1-c^2)^{1/2} v+(A-d_1 I)^{\dagger} b
\eeqq
for any unit eigenvector $v$ of $A$ associated with eigenvalue $d_1$.
\item Nondegenerate case: If the condition in (\ref{1dc}) does not hold, then  a solution is $x=(A-\lambda I)^{-1} b$ for 
some $\lambda<d_1$
with $
\|x\|=1$. 

\end{itemize}
\end{prop}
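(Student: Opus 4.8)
The plan is to verify, in each of the two cases, that the proposed $x$ satisfies the necessary and sufficient conditions of Proposition~\ref{1.3}: namely $\|x\|=1$, $A-\lambda I\succeq 0$, and $(A-\lambda I)x=b$. Everything becomes transparent after projecting onto the eigenbasis of $A$. Writing $b_i=v_i^\top b$ and $x_i=v_i^\top x$, the stationarity equation $(A-\lambda I)x=b$ decouples into the scalar relations $(d_i-\lambda)x_i=b_i$, while the semidefiniteness requirement $A-\lambda I\succeq 0$ is equivalent to $\lambda\le d_1$ (since $d_1$ is the smallest eigenvalue). The whole argument then reduces to choosing $\lambda\le d_1$ so that the resulting $x$ has unit norm.

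In the degenerate case, $V_g^\top b=0$ means $b_i=0$ for every index $i$ with $d_i=d_1$, so $b$ lies in the range of $A-d_1 I$. I would set $\lambda=d_1$, which makes $A-\lambda I\succeq 0$ automatic. The component of $x$ orthogonal to the ground eigenspace is then forced to be $(A-d_1 I)^{\dagger} b$, and because $b$ has no component in the kernel, this genuinely solves $(A-d_1 I)x=b$ on the orthogonal complement. I then add a multiple of a unit ground eigenvector $v_g$, which lies in the kernel of $A-d_1 I$ and therefore leaves the stationarity equation untouched; choosing the coefficient $(1-c_\bot^2)^{1/2}$ (legitimate precisely because $c_\bot\le 1$) gives $\|x\|^2=(1-c_\bot^2)+c_\bot^2=1$. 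All three conditions of Proposition~\ref{1.3} hold, so $x$ is a global minimizer.

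In the nondegenerate case the degeneracy hypothesis fails, so I must take $\lambda<d_1$ strictly, making $A-\lambda I\succ 0$ invertible and $x=(A-\lambda I)^{-1}b$. The task is to produce $\lambda<d_1$ with unit-norm $x(\lambda)$, i.e.\ to solve the secular equation
\beqq
\phi(\lambda):=\|(A-\lambda I)^{-1}b\|^2=\sum_{i}\frac{b_i^2}{(d_i-\lambda)^2}=1.
\eeqq
On $(-\infty,d_1)$ the function $\phi$ is continuous with $\phi'(\lambda)=\sum_i 2b_i^2/(d_i-\lambda)^3>0$, hence strictly increasing, and $\phi(\lambda)\to 0$ as $\lambda\to-\infty$. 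The limit as $\lambda\to d_1^-$ splits according to which part of the hypothesis fails: if $V_g^\top b\ne 0$, some $b_i\ne 0$ with $d_i=d_1$ forces $\phi(\lambda)\to+\infty$; if instead $V_g^\top b=0$ but $c_\bot>1$, the ground-eigenvalue terms vanish and $\phi(\lambda)\to\|(A-d_1 I)^{\dagger}b\|^2=c_\bot^2>1$. In both subcases $\phi$ increases continuously from $0$ to a limit strictly exceeding $1$, so the intermediate value theorem yields a root $\lambda<d_1$ (unique, by monotonicity). The resulting $x$ satisfies $\|x\|=1$, $A-\lambda I\succ 0$ (in particular $\succeq 0$), and $(A-\lambda I)x=b$, and Proposition~\ref{1.3} again certifies global optimality.

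The monotonicity and the two limits of $\phi$ are routine; the one point that needs care is the precise matching of the two branches of the hypothesis to the two boundary behaviors of $\phi$ at $d_1^-$, together with the verification in the degenerate case that the added ground-eigenvector component is truly free (annihilated by $A-d_1 I$) so that unit norm is attainable exactly when $c_\bot\le 1$. This threshold is what cleanly separates the degenerate from the nondegenerate regime, and I expect it to be the main obstacle to state crisply.
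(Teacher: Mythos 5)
Your proof is correct. Note that the paper itself offers no proof of this proposition—it is quoted directly from the cited reference (Hager, 2001)—so there is no internal argument to compare against; your verification of the conditions of Proposition~\ref{1.3} in the eigenbasis of $A$, with the degenerate case handled by adding a kernel component of $A-d_1 I$ and the nondegenerate case handled by the monotone secular equation $\phi(\lambda)=\|(A-\lambda I)^{-1}b\|^2=1$ on $(-\infty,d_1)$, is precisely the standard argument used in that literature, and your case-splitting of the boundary behavior of $\phi$ at $d_1^-$ (blow-up when $V_g^\top b\neq 0$, limit $c_\bot^2>1$ otherwise) is complete and accurate.
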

\begin{rem} 
Note that 
 for $\lambda< d_1$, 
 $\|(A-\lambda I)^{-1} b\|$ decreases monotonically  with respect to $\lambda$. Since  $\lambda$ in (\ref{eq23'}) is  determined  to meet that condition $\|x\|=1$, we can obtain  a tighter bound on $\lambda$ from 
\beqq
(d_1-\lambda)^{-2}\|V_g b\|^2  \le 1=\|(A-\lambda I)^{-1} b\|^2\le (d_1-\lambda)^{-2}\|b\|^2.
\eeqq
That is, $\lambda$ lies in the interval  $ [ d_1-\|b\|, d_1-\|V_g b\|]$.
 \end{rem}

\subsection{Optimality conditions}
 
% \subsection{Quadratic minimization over Stiefel manifold}
 Return to  the problem in (\ref{main_P}).
 %To be more specific,   we take  a quadratic function as  one simple approximation of $\mbf$, i.e.,  
%\beqq\label{main_P}
%\min_X \left\{\mbf(X; A,B,C):= \frac{1}{2} \langle X, AXC\rangle-\langle B, X\rangle  \right\}, \textrm{ subject to } X\in St(n,r)
%\eeqq 
%where $C\in \IR^{r\times r}$ is a positive definite matrix and $B\in \IR^{n\times r}$.
To proceed, we adopt standard approaches  in manifold optimization\cite{Manton2002, absil_optimization_2008, boumal_introduction_2022}.
Let  $\cM:=St(n,r)$ and
introduce two projections $\cP_{\cM}$ and $\Pj$.
 { For each $Y\in \IR^{n\times r}$ with rank $r$, 
 let  the
 reduced SVD  be $Y=U_Y D_Y V_Y^\top$. The
  polar projection  $\cP_{\cM}$
 on 
the manifold $\cM$ is given by
  \beqq \cP_\cM (Y)=U_Y V_Y^\top.\eeqq
  Equivalently,
the projection can be expressed as
\beqq\label{eq_14}
\cP_{\cM} (Y)=Y(Y^\top Y)^{-1/2}.
\eeqq}
For each $X\in St(n,r)$,
let  $X_\bot\in \IR^{n\times (n-r)}$ complete the orthonormal basis in $\IR^n$.
{ Let $Sym(r),Skew(r)$ denote  symmetric and skew-symmetric real matrices of size $r\times r$, respectively.}
Let $T_X St(n,r), N_X St(n,r)$ denote
the tangent space and the normal space  at $X$, respectively, 
\beqq\label{T_X}
T_X St(n,r)=\left\{
X\Delta_0+X_\bot \Delta_1: \; \Delta_0\in Skew(r),\; \Delta_1\in \IR^{(n-r)\times r}
\right\},
N_X St(n,r)=\left\{
X\Delta_2: \; \Delta_2\in Sym(r)
\right\}.
\eeqq

   \begin{definition}
     For each $U\in \IR^{n\times r}$, define the projection $\Pj_X$ on the tangent space $T_X St(n,r)$,
     \beqq
     \Pj_X U=U-X\, (X^\top U)_{sym},\eeqq
     where  $\Lambda_{sym}$ denotes $(\Lambda+\Lambda^\top)/2$ for each $\Lambda\in\IR^{r\times r}$.

     \end{definition}

For  a point $X\in \cM$  and a vector $v\in T_X\cM$, we
introduce 
the metric projection retraction 
\beqq
\cR_X(v)=arg\min_{y\in \cM} \|X+v-y\|^2=\cP_{\cM}(X+v),
\eeqq
and construct  a retraction curve $c(t)=\cR_X(tv)$.  In the following, if no confusion, we shall write $\mbf(X)$ for $\mbf(X; A,B,C)$.
 For each tangent $v$ at $T_{X} \cM$, $\cR_X(v)$ is a second-order retraction { (See Prop. 5.54 and 5.55  in~\cite{boumal_introduction_2022})}.
 Let $\gd \mbf$  denote
  the Riemannian gradient 
  and $\Hs \mbf$ denote
  the Riemannian Hessian, which are  the Euclidean gradient and Hessian followed by the orthogonal projection
 to tangent spaces, respectively. 
 Then we have
 \beqq\label{eq19}
 \mbf(\cR_X(tv))=
 \mbf(X)+
 t\langle \gd\mbf(X), v\rangle+\frac{t^2}{2}\langle\Hs \mbf(X)[v],v \rangle+O(t^3),
 \eeqq 
 See Prop. 5.44  in~\cite{boumal_introduction_2022} for the derivation details of (\ref{eq19}). 
  \begin{itemize}
\item  Employ  (\ref{eq19})
at a local optimal solution $X$ in (\ref{main_P}). Then   $\gd \mbf(X)$ must vanish, i.e.,  
 the Euclidean gradient  $\nabla \mbf(X)=AXC-B$  must lie in  $N_X St(n,r)$. Thus, 
 \beqq\label{eq23_1}
 AXC-B=X\Lambda_{sym}
 \eeqq 
 holds for some symmetric matrix $\Lambda_{sym}$.
  This  condition   is known as 
  the first-order optimal condition.
 \item  The Riemannian Hessian
   is the covariant derivative of the gradient vector,
  \begin{eqnarray}
&& \Hs \mbf(X)[v]
=\nabla_v \gd\mbf(X)\\
&=&\Pj_X \{ \nabla^2 \mbf(X) [v]-v(X^\top \nabla \mbf(X))_{sym} \}\\
&=&\Pj_X\{AvC-v\Lambda_{sym}\}.\end{eqnarray}
Observe (\ref{eq19}) at a local minimizer $X$.  The second-order condition implies that for all $v\in T_X\cM$,
 \beqq\label{eq23}
 \langle v,AvC\rangle
 -\langle v,v\Lambda_{sym} \rangle
 \ge 0.
 \eeqq
 \end{itemize}

%\Lambda\in \IR^{r,r} 
%\eeqq  for some symmetric matrix $\Lambda$. 
In general, there may be many critical points $X$ fulfilling the conditions in (\ref{eq23_1},\ref{eq23}). These points are  \textit{stationary points} (maximizers, minimizers, or saddle points). The quality of these critical points $X$ is directly related to 
the eigenvalues of the associated matrix $\Lambda C^{-1}$. 
Prop.~\ref{suf_cond} gives one necessary condition to characterize local minimizers. 
This condition also works for a global minimizer of (\ref{main_P}).

\begin{prop}\label{suf_cond}
Let $X\in St(n,r)$  be a stationary point of (\ref{main_P}).
%$\min_X \mbf(X; A,B,C)$.
Then 
\beqq\label{1stC}
X\Lambda = AXC-B
\eeqq
 holds for some symmetric matrix $\Lambda$. 
In addition,  suppose $X$ is a local minimizer in $St(n,r)$.  
Let 
$Y:=X\Delta_0 +X_\bot\Delta_1 \in T_{X} St(n,r)$, where 
 $[X, X_\bot]\in \cO(n)$ holds for some matrix $X_\bot\in \IR^{n\times (n-r)}$.
 Then 
\beqq -\langle  Y^\top Y ,\Lambda\rangle+
\langle Y , A Y  C\rangle\ge 0\label{eq35}
\eeqq
holds.
Let $d^\bot_{min}$ be the smallest  eigenvalue of $X_\bot^\top A X_\bot$ and  let  $\gamma_1,\gamma_2,\ldots, \gamma_r$ be eigenvalues of $\Lambda C^{-1}$.
Then 
 \beqq\label{eq_36}
 d^\bot_{min}\ge \max(\gamma_1,\ldots, \gamma_r).
 \eeqq
 %Thus, $d_r\ge \max(\gamma_1,\ldots, \gamma_r)$.
\end{prop}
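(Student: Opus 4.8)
The plan is to prove the three assertions in the order they are stated, extracting the second-order information directly from the explicit curve $\rho(t)$ in (\ref{eq36}). First I would dispose of the first-order condition. The decomposition (\ref{T_X}) exhibits $N_X St(n,r)=\{X\Delta_2:\Delta_2\in Sym(r)\}$ as the orthogonal complement of $T_X St(n,r)$ in $\IR^{n\times r}$ with respect to the Frobenius inner product. At a stationary point the Riemannian gradient $\Pj_X(\nabla\mbf(X))=\nabla\mbf(X)-X(X^\top\nabla\mbf(X))_{sym}$ vanishes, so $\nabla\mbf(X)=AXC-B$ lies in $N_X St(n,r)$; writing $AXC-B=X\Lambda$ and left-multiplying by $X^\top$ gives $\Lambda=X^\top(AXC-B)=(X^\top\nabla\mbf(X))_{sym}$, which is symmetric, establishing (\ref{1stC}).

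Next I would obtain the second-order inequality (\ref{eq35}) by setting $g(t):=\mbf(\rho(t))$ and using that $t=0$ is a local minimum of $g$, so that $g'(0)=0$ and $g''(0)\ge 0$. Writing $Q=[X,X_\bot]\in\cO(n)$ and differentiating $\rho(t)=Q\exp(t\Omega)I_{n,r}$ twice gives $\rho'(0)=Y=X\Delta_0+X_\bot\Delta_1$ and $\rho''(0)=X(\Delta_0^2-\Delta_1^\top\Delta_1)+X_\bot\Delta_1\Delta_0$. A direct differentiation of the quadratic, using symmetry of $A$ and $C$, yields $g''(0)=\langle\rho''(0),AXC-B\rangle+\langle Y,AYC\rangle=\langle\rho''(0),X\Lambda\rangle+\langle Y,AYC\rangle$. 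The crucial algebraic step is that, since $X^\top X=I_r$, $X^\top X_\bot=0$, and $\Delta_0$ is skew, one has $\rho''(0)^\top X=\Delta_0^2-\Delta_1^\top\Delta_1=-Y^\top Y$, so the normal-direction term collapses to $\langle\rho''(0),X\Lambda\rangle=tr\!\big((\rho''(0)^\top X)\Lambda\big)=-\langle Y^\top Y,\Lambda\rangle$. Hence $g''(0)=-\langle Y^\top Y,\Lambda\rangle+\langle Y,AYC\rangle\ge 0$, which is exactly (\ref{eq35}). (Equivalently one may simply invoke the Riemannian Hessian inequality (\ref{eq23}), noting that $\langle v,v\Lambda\rangle=\langle v^\top v,\Lambda\rangle$ for symmetric $\Lambda$.)

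Finally I would specialize (\ref{eq35}) to read off (\ref{eq_36}). Taking $\Delta_0=0$ and the rank-one choice $\Delta_1=wu^\top$, where $w\in\IR^{n-r}$ is a unit eigenvector of $X_\bot^\top A X_\bot$ attaining its smallest eigenvalue $d^\bot_{min}$ and $u\in\IR^r$ is arbitrary, gives $Y=X_\bot w u^\top$, so that $Y^\top Y=uu^\top$, $\langle Y,AYC\rangle=(w^\top X_\bot^\top A X_\bot w)(u^\top C u)=d^\bot_{min}(u^\top C u)$, and $\langle Y^\top Y,\Lambda\rangle=u^\top\Lambda u$. Then (\ref{eq35}) reads $d^\bot_{min}(u^\top C u)\ge u^\top\Lambda u$ for every $u$, i.e. $d^\bot_{min}\ge (u^\top\Lambda u)/(u^\top C u)$. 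Substituting $u=C^{-1/2}v$ and maximizing this generalized Rayleigh quotient produces the largest eigenvalue of $C^{-1/2}\Lambda C^{-1/2}$, which equals $\max_j\gamma_j$ because $\Lambda C^{-1}$ is similar to $C^{-1/2}\Lambda C^{-1/2}$ via conjugation by $C^{-1/2}$. This gives $d^\bot_{min}\ge\max(\gamma_1,\ldots,\gamma_r)$, which is (\ref{eq_36}).

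I expect the main obstacle to be the second-order computation along $\rho(t)$ — specifically, computing $\rho''(0)$ and verifying the identity $\rho''(0)^\top X=-Y^\top Y$ that makes the normal-direction term reduce cleanly to $-\langle Y^\top Y,\Lambda\rangle$. The bookkeeping here rests entirely on the skew-symmetry of $\Delta_0$ together with $X^\top X=I_r$ and $X^\top X_\bot=0$; once this collapse is secured, the remaining Rayleigh-quotient argument is routine.
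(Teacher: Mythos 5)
Your proof is correct and follows essentially the same route as the paper's: differentiate the curve $\rho(t)=[X,X_\bot]\exp(t\Omega)I_{n,r}$ twice, use the skew-symmetry of $\Delta_0$ together with $X^\top X=I_r$ and $X_\bot^\top X=0$ to collapse the normal-direction term to $-\langle Y^\top Y,\Lambda\rangle$, and then specialize to $\Delta_0=0$ with a rank-one $\Delta_1$ built from the minimal eigenvector of $X_\bot^\top A X_\bot$. The only cosmetic differences are that you obtain (\ref{1stC}) via the tangent-space projection formula rather than directly from $g'(0)=0$ for all $\Delta_0,\Delta_1$, and that you keep $u$ arbitrary and maximize a generalized Rayleigh quotient where the paper substitutes the eigenvector of $C^{-1/2}\Lambda C^{-1/2}$ directly; both yield (\ref{eq_36}) identically.
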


%For completeness, we verify the optimality condition in Prop.~\ref{suf_cond} via a geodesic on $St(n,r)$ in the appendix. 
\begin{theorem}\label{Thm1} 
Let $X$ be a global  optimal solution in (\ref{main_P}),and let $\Lambda$ be its associated multiplier, 
\beqq
\Lambda=X^\top (AXC-B).
\eeqq 
%Let $\{\gamma_i: i=1,\ldots, r\}$ be eigenvalues of $\Lambda C^{-1}$.
Then $(X,\Lambda)$ satisfies  the first-order condition, \beqq
AXC=B+X\Lambda, \; \Lambda=\Lambda^\top,
\eeqq
and the second-order condition,
\beqq\label{d_r}
\Lambda \preceq d_r C.
%d_r\ge \max \{\gamma_1, \ldots, \gamma_r\}.
\eeqq
\end{theorem}
\begin{proof}
Observe that    $d_r$ is an upper  bound for the smallest eigenvalue of $X_\bot^\top A X_\bot$, after applying
Courant-Fischer min-max theorem(Theorem 4.2.11\cite{horn_matrix_1985-1}) on Prop.~\ref{suf_cond}.  This completes the proof.
%in (\ref{eq59}). Thus Prop.~\ref{exist}  actually implies

\end{proof}

\subsection{Qualified critical points } \label{sec_3.3} 

Next, we aim to further refine the upper bound condition of $\Lambda$ in Proposition ~\ref{suf_cond}, to effectively reach a global minimizer algorithmically or come close to it. To address this, we introduce the set of qualified critical points defined below. Thanks to Theorem~\ref{Thm1}, the existence of a global minimizer  guarantees the existence of these qualified critical points.

%For our application in (\ref{main_P}), we would like to sharpen further the upper bound condition of $\Lambda$ in  Prop.~\ref{suf_cond}, so that we can reach or get close to  a global minimizer $X$ algorithmatically.
 
% The following Prop.~\ref{2.6} and Prop.~\ref{exist} together indicate that the multiplier $\Lambda$  associated with a global minimizer $X$   satisfies 
%$
%\Lambda\preceq d_r C.
%$ 
% Motivated by the observation, we introduce the set: \textbf{qualified critical points} defined below.
 %The existence of the qualified critical points is ensured by the existence of the global minimizer.
  
   \begin{definition} Let $C\succ 0$.
We say that $X$ is a \textbf{qualified} critical point of (\ref{main_P}), if 
\beqq
AXC=B+X\Lambda,\; \; \Lambda=\Lambda^\top,\;  \Lambda\preceq d_r C
\eeqq
hold for some  multiplier matrix $\Lambda$.
Equivalently, introduce the eigenvector decomposition,   \beqq\label{eq46}
\Gamma:=C^{-1/2}\Lambda C^{-1/2}=U\diag([\gamma_1,\ldots, \gamma_r]) U^{-1}\eeqq
via some orthogonal $U$.
The associated eigenvalues $\gamma_i$ of $\Gamma$ in (\ref{eq46}) are bounded above by $d_r$.
 
\end{definition}

Introduce another system matrix $A'$, whose eigenvalues are shifted by some scalar $\alpha\in \IR$, 
$A':= A-\alpha I_n$. 
We shall examine minimizers of  two problems with 
 \beqq\label{eq_2}
\min_{X\in \IR^{n\times r}} \left\{ \mbf'(X, A', B, C):=\langle X, A'XC\rangle-2\langle B,X\rangle\right\}
\textrm{ subject to $X^\top X\preceq  I_r$;
}
\eeqq
 \beqq\label{eq_2'}
\min_{X\in \IR^{n\times r}} \left\{ \mbf'(X, A', B, C):=\langle X, A'XC\rangle-2\langle B,X\rangle\right\}
\textrm{ subject to $X^\top X=  I_r$. 
}
\eeqq
The convex constraint set \beqq
 \cD:=\{X: \|X\|_2\le 1\}=\{X : X^\top X\preceq  I_r \}\eeqq serves as the convex hull of  $St(n,r)=\{X: X^\top X= I_r\}$.
 The  convex relaxation from (\ref{eq_2'}) to (\ref{eq_2}) prompts us to introduce   qualified critical points of (\ref{main_P}). 
 Observe that   the set of minimizers of (\ref{eq_2'}) remains invariant, regardless of  the value of $\alpha$; the set of minimizers of 
 (\ref{eq_2}) varies with respect to $\alpha$.
 The following two observations indicate that when the system matrix $A'$ does not have too many negative eigenvalues, we can provide an approximation of the solution to the second problem through the answer to the first problem. 
 \begin{itemize} 
 \item Consider the case  $\alpha\le d_1$, where
 $A'$ is positive semi-definite.  The task in  (\ref{eq_2})  is a convex minimization problem. Any local minimizer of (\ref{eq_2}) is also  a global minimizer, although the minimizer may not be located on $St(n,r)$.
When  a minimizer $X_*$ of (\ref{eq_2}) is located on  $St(n,r)$, then  $X_*$ will also be
 a global minimizer of (\ref{eq_2'}).
 \item  Suppose  $\alpha\ge d_r$. Theorem ~\ref{thm2} indicates that 
there exists  a  minimizer of (\ref{eq_2})   on  $St(n,r)$. Hence,  two problems have the same objective value, in the case with  $\alpha\ge d_r$.
 \end{itemize}

%
% When 
%   a minimizer $X_*$ of (\ref{eq_2}) is located on  $St(n,r)$, then  $X_*$ will also be
% a global minimizer of (\ref{eq_2'}).
% Second,  minimizers of (\ref{eq_2'}) remain invariant, regardless of  the value of $\alpha$. Specifically, 
%  

\begin{theorem} \label{thm2}Suppose $C\succ 0$.
Assume the separated eigenspace $d_r< d_{r+1}$ of $A$.  
Consider a minimizer of the relaxed problem in (\ref{eq_2}), where $A'=A-\alpha I_n$. Suppose  $\alpha\ge d_r$.
 Then 
 a  minimizer of the relaxed problem in (\ref{eq_2}) is a global minimizer of (\ref{eq_2'}).
 
\end{theorem}
\begin{proof} Since the feasible set from (\ref{eq_2'}) to (\ref{eq_2}) is enlarged, the optimal value in (\ref{eq_2}) is a lower bound for the optimal value in (\ref{eq_2'}).
We shall prove that a global minimizer $X$ in (\ref{eq_2}) satisfies $X^\top X=I_r$, which demonstrates that  the minimizer is a minimizer of (\ref{eq_2'}).

 Let $V_-$ and $V_+$ denote the matrices whose columns are unit eigenvectors of $A$ associated with the non-positive
 eigenvalues and the positive eigenvalues, respectively.
Use the decomposition to express $A$  as
$A=A_++A_-$ with $A_+:=V_+ V_+^\top A V_+ V_+^\top$ and
  $A_-:=V_- V_-^\top A V_- V_-^\top$. 
Use the decomposition to express $X,B$  as
\beqq
X=X_+ + X_-, \; B=B_++B_-,\; 
\eeqq
where $X_+, B_+\in span\{V_+\}$, $X_-, B_-\in span\{V_-\}$ and
 \beqq
B_+:=V_+ V_+^\top B,\;
B_-:=V_- V_-^\top B,\;
X_+:=V_+ V_+^\top X,\;
X_-:=V_- V_-^\top X,
\eeqq
 Fix $X_+$.
 According to
 \beqq
\min_X \mbf(X; A, B,C)=\min_{X_+}\mbf(X_+; A_+, B_+,C)+\min_{X_-}\mbf(X_-; A_-, B_-,C),
\eeqq
the optimality of $X$ indicates that   $X_-$ is a maximizer of
\beqq\label{eq44}
-\max_{Y} \mbf(Y; -A_-, B_-, C), \textrm{ subject to } Y^\top Y \preceq I_r- X_+^\top X_+=: S.
\eeqq
We may assume that $S$ has $r'$ positive eigenvalues. 
( Otherwise, we have $S=0$ and  $X=X_+\in St(n,r)$, which   complete the proof. )
 Observe that  (\ref{eq44}) is a quadratic problem of $Y S^{-1/2}$, subject to   $(Y S^{-1/2})^\top (Y S^{-1/2}) \preceq I_{r'}$.
 Since $-A_-$ is positive definite,
from the result in Prop.~\ref{max_P},  we can find a maximizer $X_- S^{-1/2}$  on Stiefel manifold. Thus,  $(X_- S^{-1/2})^\top (X_- S^{-1/2})= I_{r'}$, i.e., $X_-^\top X_- =S$, and thus $X\in St(n,r)$.

\end{proof}

\begin{rem}

Define the Lagrangian dual function associated with the primal problem in (\ref{eq_2}) as
the minimum value over $\Lambda'\preceq 0$, 
\beqq
\mbg(\Lambda')=\inf_{X\in \cD}\{ \mbf'(X)-\langle \Lambda', X^\top X-I_r\rangle\}.
\eeqq
From the KKT condition, an optimal solution
$X$ satisfies 
\beqq
A'XC-B=X\Lambda' \textrm{  for some symmetric matrix $\Lambda'\preceq 0$. }
\eeqq
Equivalently, by the definition of $A'$,
\beqq
AXC-B=X\Lambda,
\eeqq
holds for some 
 $\Lambda=\Lambda'+\alpha  C\preceq \alpha C$.
 Choose $\alpha=d_r$. Theorem~\ref{thm2} indicates that  each critical point of (\ref{eq_2}) corresponds to one qualified critical point of (\ref{eq_2'}).
 \end{rem}

\subsection{Regularized problems}
%\subsubsection{ Optimality in regularized problems}
In general,   a qualified critical point of (\ref{eq_2}) is not necessarily a global minimizer. 
The following illustrates that 
a qualified critical point  $X$ in (\ref{main_P}) fulfilling 
 a tighter  condition in (\ref{2ndC})
  is a global minimizer.

\begin{theorem}[Global solutions]\label{global}
% Let  $d_1$ be the smallest eigenvalue of $A$.
Let $X'$ be a stationary point of (\ref{main_P}) with  the associated multiplier matrix $\Lambda'$.
  Suppose  
  \beqq\label{2ndC}
d_1 C \succeq \Lambda'.
\eeqq
Then $X'$ is a global minimizer.
 In addition,  suppose  $ d_1 C \succ \Lambda'$.
Then $X'$ is the unique global minimizer.

\end{theorem}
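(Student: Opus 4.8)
The plan is to prove global optimality by a direct, exact second-order expansion of $\mbf$ at the stationary point $X'$, using the fact that on $St(n,r)$ every competitor $Y$ satisfies $Y^\top Y = (X')^\top X' = I_r$, which turns the first-order term into a sign-definite quantity. First I would fix an arbitrary $Y\in St(n,r)$, set $D:=Y-X'$, and expand the quadratic objective exactly,
$$\mbf(Y) - \mbf(X') = \langle AX'C - B,\, D\rangle + \tfrac12\langle D,\, ADC\rangle.$$
Substituting the first-order condition $AX'C-B = X'\Lambda'$ (with $\Lambda'$ symmetric) rewrites the linear term as $tr(\Lambda'\,(X')^\top D)$.

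The key identity is the computation of the symmetric part of $(X')^\top D$. Since $Y,X'\in St(n,r)$, expanding $D^\top D=(Y-X')^\top(Y-X')$ gives $(X')^\top Y + Y^\top X' = 2I_r - D^\top D$, whence $\big((X')^\top D\big)_{sym} = \big((X')^\top Y - I_r\big)_{sym} = -\tfrac12 D^\top D$. Because $\Lambda'$ is symmetric, $tr(\Lambda'\,(X')^\top D) = -\tfrac12\, tr(\Lambda'\,D^\top D)$. Then I would split $A = A' + d_1 I_n$ with $A':=A-d_1 I_n$ in the quadratic term and combine the $d_1$-part with the rewritten linear term to obtain the clean decomposition
$$\mbf(Y) - \mbf(X') = \tfrac12\, tr\!\big((d_1 C - \Lambda')\,D^\top D\big) + \tfrac12\, tr\!\big(D^\top A' D C\big).$$

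Finishing is then immediate from semidefiniteness. Since $d_1$ is the smallest eigenvalue of $A$, we have $A'\succeq 0$, and with $C\succ 0$ the second term equals $tr\big((DC^{1/2})^\top A'(DC^{1/2})\big)\ge 0$; by hypothesis (\ref{2ndC}), $d_1 C - \Lambda'\succeq 0$, so the first term is the trace of a product of two positive semidefinite matrices and is $\ge 0$. Hence $\mbf(Y)\ge \mbf(X')$, proving $X'$ is a global minimizer. For uniqueness under the strict condition $d_1 C \succ \Lambda'$, I would observe that equality $\mbf(Y)=\mbf(X')$ forces both terms to vanish; but the first term alone is bounded below by $\lambda_{\min}(d_1 C - \Lambda')\,\|D\|^2 > 0$ whenever $D\neq 0$, so $D=0$ and $Y=X'$.

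I expect the main obstacle to be purely the bookkeeping in the second paragraph, namely reducing $tr(\Lambda'\,(X')^\top D)$ via the symmetry of $\Lambda'$ and the Stiefel constraint, and recombining the shifted quadratic so that the multiplier bound appears \emph{exactly} as $d_1 C - \Lambda'$. Conceptually the only real content is recognizing that orthogonality makes the first-order contribution sign-definite; once the decomposition is in hand, both claims drop out from $A'\succeq 0$ and the two hypotheses on $\Lambda'$. As a cross-check I would note the convex-relaxation route: $A'\succeq 0$ makes (\ref{eq_2}) with $\alpha=d_1$ convex, and the rewritten first-order condition $A'X'C-B = X'(\Lambda'-d_1 C)$ with $\Lambda'-d_1 C\preceq 0$ is precisely its KKT condition, so $X'$ minimizes the relaxation and a fortiori (\ref{main_P}).
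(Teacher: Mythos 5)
Your proof is correct and takes essentially the same route as the paper: writing $D=Y-X'$, both arguments reduce to the exact identity $\mbf(Y)-\mbf(X')=\frac{1}{2}\langle D, ADC\rangle-\frac{1}{2}\langle \Lambda', D^\top D\rangle$ and conclude from $A\succeq d_1 I_n$ and $d_1 C-\Lambda'\succeq 0$; the paper produces this identity by expanding the Lagrangian $\cL(X,\Lambda')$ (whose linear term vanishes by the first-order condition), while you derive the same term directly from the Stiefel identity $\left((X')^\top D\right)_{sym}=-\frac{1}{2}D^\top D$, a purely organizational difference. Your uniqueness argument under $d_1 C\succ\Lambda'$ matches the paper's as well.
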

\begin{proof}
For  $\Lambda'\in \IR^{r\times r}$ and $X\in St(n,r)$, let
\beqq\label{eq_F}
\cL(X,\Lambda' ):=
\frac{1}{2}\langle X, A X C\rangle-\langle B, X\rangle-\frac{1}{2}\langle\Lambda', X^\top X-I\rangle
.\eeqq
Reformulate (\ref{eq_F}) in terms of Taylor series of $X-X'$ around $X'$:
%Since $X'$ is a stationary point, then there exists $\Lambda\in \IR^{2\times 2}$, such that 
%Expanding in a Taylor series of $X$ around $X'$ gives
\begin{eqnarray}\label{eq27_1}
&&\mbf(X)=\cL(X,\Lambda')\\
&=&\cL(X', \Lambda')+\frac{1}{2}\left\{\langle (X-X') , A (X-X') C\rangle-\langle (X-X') ,  (X-X')\Lambda'\rangle\right\},\\
&\ge &\cL(X' ,\Lambda')+\frac{1}{2}\langle (X-X')^\top    (X-X'), d_1 C-\Lambda' \rangle\ge \mbf(X'), \nonumber
\end{eqnarray}
where the linear term is dropped due to  (\ref{1stC}).
Since $\Lambda'$ satisfies $d_1 C\succeq \Lambda'$, then (\ref{eq27_1}) implies that  \beqq 
\mbf(X)=\cL(X,\Lambda')\ge \cL(X' ,\Lambda')=\mbf(X')
\eeqq for each $X\in St(n,r)$, i.e., $X'$ is a global minimizer. On the other hand, suppose  $\mbf(X)=\mbf(X')$ holds for some $X\in St(n,r)$. The condition $ d_1 C \succ \Lambda'
$ implies 
the uniqueness from $(X-X')^\top (X-X')=0$.
\end{proof}

When $C=I$, the sufficient condition for a global minimizer was reported in Theorem 4.1~\cite{Zhang2007}, where authors studied the unbalanced Procrustes problem.
Generally, when $d_1\neq d_r$,  the condition in  (\ref{2ndC})  could be too strict to be fulfilled for any critical points. 
To proceed,  we consider approximate ``regularized" models, where the eigenvalues of the system matrix are lifted,  $ A\to \kA$,
\beqq\label{eq5}
\kA:=A+\sum_{k=1}^{r-1} (d_r-d_k) v_k v_k^\top=A+V_g (d_r I_r-\diag(d_1,\ldots, d_r))V_g^\top,\; V_g:=[v_1,\ldots, v_r]\in\IR^{n\times r}.
%=[v_1, v_2, \ldots, v_n]\diag(d_1', \ldots, d_n')[v_1, v_2, \ldots, v_n]^\top.
\eeqq

The following result is one specific case of Theorem~\ref{global}, where $d_1=d_r$. 
It is important to note that  in this situation, $\kA-d_r I$ is positive semidefinite and
  a qualified critical point is automatically a global minimizer, as stated in Theorem~\ref{thm2}.

 \begin{cor}\label{thm1r}
Consider the regularized problem,
\beqq\label{regu}
\min_X \{\mbf(X, \kA, B, C)=\langle X, \kA XC\rangle-2\langle X, B\rangle: X\in St(n,r)\}.
\eeqq  A qualified critical point is a global solution.
\end{cor}

%\subsubsection{When does a qualified critical point become a global minimizer in (\ref{main_P})? }

We provide several additional cases, where a qualified critical point is a global minimizer. 
The following proposition serves as a preliminary example. Assume the reduced SVD: $B=U_B D_B V_B^\top$ with rank $r$.
 \begin{prop} \label{C=I}
  Suppose $C= I$. 
 Suppose left singular vectors of $B$ lie in the column space of $V_g$, i.e., $U_B=span\{V_g\}$. 
Then  
a  global minimizer $X$ in (\ref{main_P}) is
\beqq\label{eq131}
X=U_B V_B^\top=P_\cM (B).
\eeqq
 In this situation,  $X^\top B$ is symmetric and positive semidefinite, and  $\gamma_j\le d_j$ for $j=1,\ldots, r$.

 \end{prop}
 \begin{proof} The objective $\mbf$ has a lower bound: for any $X\in St(n,r)$, 
 \beqq\label{LB}
 \mbf(X; A, B, I)\ge \frac{1}{2}(d_1+\ldots+d_r)-\|B\|_*,
 \eeqq
 where $\|B\|_*$ is the nuclear norm, i.e.,  the sum of singular values of $B$. Note that the lower bound in (\ref{LB}) is reached at $X=V_g V_B^\top$, which verifies the optimality of $X$. 
 In addition,
 $\Lambda$
 satisfies  \beqq\Lambda=
 X^\top AX-X^\top B
 = V_B ( Q_B^\top  \diag([d_1,\ldots, d_r]) Q_B- D_B) V_B^\top. 
 \eeqq
Since the diagonal of $D_B$ is nonnegative, %by Weyl's inequality,
then $
\gamma_j\le  d_j $ for all $j=1,\ldots, r$.
 \end{proof}
 
 Motivated by the above case, we propose a non-degenerate condition on   $V_g^\top B$ to bridge the gap between the second-order conditions in Theorem~\ref{Thm1}  and Theorem~\ref{global}.
 The following safeguard estimate demonstrates that when the projection of $B$ onto   $V_g$ is sufficiently large in comparison to the spectral gap  $d_r-d_1$, any qualified critical point \( X \) will be automatically  a global minimizer in  (\ref{main_P}). 

\begin{definition}
 Let $\sigma$ be the smallest singular value of $V_g^\top B C^{-1}$.  When  $\sigma>0$, the problem in (\ref{main_P}) is said to be non-degenerate. 
\end{definition}

\begin{theorem}[Safeguard estimate] \label{safe}Let $V_g=[v_1, v_2,\ldots, v_r]\in \IR^{n\times r}$ be the ground eigenvector matrix. Let $(X,\Lambda)$ be a qualified critical point  in  (\ref{main_P}).
 Let $\sigma$ be the smallest singular value of $V_g^\top B C^{-1}$.  Then \beqq
 \gamma_j\le d_r- \sigma \textrm{ for $j=1,\ldots, r$.}
 \eeqq
  In addition,
suppose
 \beqq\label{eq285}
 \sigma>d_r-d_1.
 \eeqq
Then  $\Lambda \preceq d_1 C$ , and thus $X$ is a global minimizer.
\end{theorem}
\begin{proof}
 Start with  the first-order condition $AX=X\Lambda C^{-1}+B C^{-1}$. 
Let $u_j$ be a unit eigenvector  of $\Lambda C^{-1}$ corresponding to eigenvalue $\gamma_j$.  Taking the product of the first order condition with $u_j$ and $v_i$  from the right and left-hand sides yields 
 \beqq
d_i v_i^\top X u_j=v_i^\top  AXu_j= (v_i^\top X)\Lambda C^{-1} u_j+v_i^\top B C^{-1} u_j,
 \eeqq
which implies
 \beqq\label{eq288}
( d_i-\gamma_j) v_i^\top X u_j = v_i^\top B C^{-1} u_j.
 \eeqq
The second-order condition of a qualified point  $X$  indicates  \beqq\label{eq286}
\textrm{$\gamma_j\le d_r$ for $j=1,2,\ldots, r $.}\eeqq
Note that  $\|V_g\|_2=1=\|X\|_2$, then $\| V_g^\top X u_j\|\le \| X u_j\|\le 1$. 
Since $|v_r^\top B C^{-1} u_j|$ is bounded below by the smallest singular value of $V_g^\top B C^{-1}$, 
then with  $i=r$, 
\beqq\label{eq286'}
d_r-\gamma_j \ge  (d_r-\gamma_j)| v_r^\top X u_j | =|v_r^\top B C^{-1} u_j|\ge \sigma.
\eeqq
From  (\ref{eq285},\ref{eq286'}),
we have
% \begin{eqnarray}
%&&|d_r-d_1| <\sigma \le \| V_g^\top BC^{-1} u_j\|\\
%&\le&
% \max_i \{ |d_i-\gamma_j|: \; i=1,\ldots, r\} \| V_g^\top X u_j\|\\
%&\le& \max_i \{ |d_i-\gamma_j|: \; i=1,\ldots, r\}=\max\{ |d_1-\gamma_j|, |d_r-\gamma_j| \}, 
% \end{eqnarray}
% which implies
 $\gamma_j< d_1$ for each $j=1,\ldots, r$. The proof is complete according to Theorem~\ref{global}.
 
\end{proof}

\subsubsection{Degenerate cases} The problem in (\ref{main_P}) can be  challenging when columns of $B$ are nearly orthogonal to $V_g$. Even for the regularized problem,
when   $V_g^\top B$ is singular, i.e., $\sigma=0$,  we encounter  the degenerate case, where 
multiple qualified minimizers $X$ can exist. 
Here is one example.
\begin{prop} Consider the regularized problem in (\ref{regu}) with
  $V_g^\top B C^{-1}=0$. Suppose
 the spectral norm bound holds:
\beqq\label{eq_76_1}
\|(\kA-d_1 I)^{\dagger} (BC^{-1})\|_2\le 1.
\eeqq
There exists  some  $U\in \IR^{r\times r}$, such that 
\beqq\label{eq_76}
X=V_g U+(\kA-d_1 I)^\dagger BC^{-1}.
\eeqq  is a global solution.
In this case, 
 the multiplier    is  $\Lambda =d_1 C$.

\end{prop}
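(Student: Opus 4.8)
The plan is to exhibit the pair $(X,\Lambda)$ with $\Lambda=d_1C$, verify that it satisfies the first-order condition and the qualified-critical-point inequality for the regularized problem (\ref{regu}), and then invoke Corollary~\ref{thm1r} to upgrade this to global optimality. Write $W:=(\kA-d_1I)^\dagger BC^{-1}$, so the candidate is $X=V_gU+W$ for an $r\times r$ matrix $U$ to be chosen. With $\Lambda=d_1C$ (symmetric, since $C$ is), the first-order condition $\kA XC=B+X\Lambda$ becomes, after right-multiplying by $C^{-1}$, the equivalent identity $(\kA-d_1I)X=BC^{-1}$. First I would record the spectral geometry of $\kA$: by its definition (\ref{eq5}) the shift is designed to annihilate the ground eigenspace, $(\kA-d_1I)V_g=0$, while keeping $\kA-d_1I\succeq 0$; consequently $\mathrm{range}(\kA-d_1I)=\mathrm{span}(V_g)^\perp$. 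Splitting $X=V_gU+W$, the contribution $(\kA-d_1I)V_gU$ vanishes, and it remains to check $(\kA-d_1I)W=BC^{-1}$.

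That last identity is exactly where the degeneracy hypothesis $V_g^\top BC^{-1}=0$ enters. Since $(\kA-d_1I)(\kA-d_1I)^\dagger$ is the orthogonal projector onto $\mathrm{range}(\kA-d_1I)=\mathrm{span}(V_g)^\perp$, and the hypothesis places $BC^{-1}$ in precisely that subspace, the projector fixes $BC^{-1}$; hence $(\kA-d_1I)W=BC^{-1}$ and the first-order condition holds with the claimed multiplier.

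For feasibility I would use that $W\in\mathrm{range}(\kA-d_1I)=\mathrm{span}(V_g)^\perp$, so $V_g^\top W=0$ and the cross terms drop out, giving $X^\top X=U^\top V_g^\top V_gU+W^\top W=U^\top U+W^\top W$. I then choose $U$ so that $U^\top U=I_r-W^\top W$, for instance the symmetric square root $U=(I_r-W^\top W)^{1/2}$; this is well defined precisely because the spectral-norm hypothesis (\ref{eq_76_1}), namely $\|W\|_2\le 1$, guarantees $I_r-W^\top W\succeq 0$. With this $U$ one obtains $X^\top X=I_r$, i.e.\ $X\in St(n,r)$. The freedom of left-multiplying such a $U$ by an orthogonal matrix accounts for the phrasing ``there exists some $U$''.

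Finally, since $C\succ 0$ and $d_1\le d_r$, the multiplier obeys $\Lambda=d_1C\preceq d_rC$, so $(X,\Lambda)$ meets the defining inequality of a qualified critical point of (\ref{regu}); Corollary~\ref{thm1r} then certifies that $X$ is a global solution. I expect the main obstacle to be the first-order verification of the previous paragraphs: establishing that the shift $\kA-d_1I$ simultaneously annihilates $V_g$ and stays positive semidefinite, and then exploiting $V_g^\top BC^{-1}=0$ to place $BC^{-1}$ in the range of the shift so that the pseudoinverse reconstructs it exactly. The feasibility and global-optimality steps are then essentially bookkeeping built on these two spectral facts.
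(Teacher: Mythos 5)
Your proof is correct and follows essentially the same route as the paper: plug in the ansatz $X=V_gU+(\kA-d_1I)^\dagger BC^{-1}$, use the hypothesis $V_g^\top BC^{-1}=0$ and the pseudoinverse to verify $(\kA-d_1I)X=BC^{-1}$ with $\Lambda=d_1C$, and pick $U$ with $U^\top U=I_r-W^\top W$ (the paper via Cholesky, you via the symmetric square root), which the spectral-norm bound makes possible. You simply spell out what the paper leaves implicit — the range/projector facts behind the pseudoinverse step, the vanishing cross terms in $X^\top X$, and the final appeal to Corollary~\ref{thm1r} for global optimality.
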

\begin{proof} We shall construct a global solution $X$ with $\Lambda C^{-1}=d_1 I_r$.
%Let $B=[b_1, b_2] Q$ and $X=[x_1, x_2]$. 
Consider the solution in the form of (\ref{eq_76}).
For
 the condition $X\in St(n,r)$, we  choose $U$ to satisfies  \beqq\label{eq_81}
I={X}^\top X= U^\top U+ C^{-1}B^\top ((\kA-d_1 I)^\dagger)^\top   (\kA-d_1 I)^\dagger B C^{-1}.
\eeqq
The matrix $U$ is not unique.
We can determine a matrix  $U$  by Cholesky decomposition. 
We have the first-order condition 
\beqq\label{eq83}
(\kA-d_1 I) X=BC^{-1}\eeqq under  the multiplier     $\Lambda =d_1 C$.
 \end{proof}
% 
% Conversely, the following indicates that the full rank of $V_g^\top B$ ensures $\gamma_k<d_1$ for $k=1,\ldots, r$.
% \begin{prop} Consider 
% a qualified critical point $X$ to 
% the regularized problem \beqq
% \min_X \mbf(X; \kA, B,C).
% \eeqq
% Suppose $V_g^\top B\in \IR^{r\times r}$ has rank $k$ with  $0\le k<r$. Then  at least $k$  eigenvalues $\gamma_{1},\ldots ,\gamma_{k}$ of $\Gamma$ are  less than $d_1$. 
%  \end{prop}
%  \begin{proof} We prove it by contradiction. 
%%  Since $V_g^\top B$ has rank $k$, then there exists some orthogonal vectors $v_1,\ldots, v_{r-k}$, such that $v_j^\top B=0$ 
%  Suppose $\gamma_k=d_1$ for the  eigenvalues $\gamma_1\le \ldots\le \gamma_r$.   Since $X$ is a qualified critical point, then $\gamma_k=\gamma_{k+1}=\ldots=\gamma_r=d_1$. The first-order condition implies that  for $j=k,\ldots, r$, 
%  \beqq
%  (\kA-d_1 I) x_j=b_j,\; i.e.,  V_g^\top (\kA-d_1 I) x_j=V_g^\top b_j=0.
%  \eeqq
%Thus,   $V_g^\top BC^{-1}$, which has at least $r-k+1$ zero columns,  has rank at most $k-1$. The contradiction completes the proof. 
%  \end{proof}
Readers should be aware of the non-uniqueness of the global minimizers from the non-uniqueness of $U$ in (\ref{eq_81}).
Finally, we provide 
one example, which illustrates the existence of multiple qualified minimizers under the small $B$ compared with the spectral gap.
Roughly, computing a global minimizer becomes challenging under a small norm of $B$.
\begin{ex}
Consider the problem in (\ref{main_P}),
\beqq 
A=\left[
\begin{array}{ccc}
d_1  &  0 & 0\\
0 & d_2 & 0\\
0 &  0 & d_3
\end{array}
\right],
B=\left[
\begin{array}{cc}
\delta_1  &  0\\
0 & \delta_2\\
0 &  0
\end{array}
\right], \; C=I_2,
\eeqq
with $\delta_1>0$, $\delta_2>0$ and $d_1<d_2<d_3$.
Examine the following two critical points on $St(n,r)$: \beqq X=X_1=[e_1, e_2],\textrm{ and } X=X_2=[-e_1, e_2].\eeqq 
We have $\mbf(X_1)<\mbf(X_2)$, since
\beqq
\mbf(X_1)=(d_1+d_2)/2-(\delta_1+\delta_2),\; 
\mbf(X_2)=(d_1+d_2)/2-(-\delta_1+\delta_2).
\eeqq
The associated  
multiplier matrices are  
 $\Lambda=X^\top AX-X^\top B$,
\beqq 
\Lambda=\Lambda_1=\diag(d_1-\delta_1, d_2-\delta_2),\; \Lambda=\Lambda_2=\diag(d_1+\delta_1, d_2-\delta_2).
\eeqq  From (\ref{T_X}), the tangent space $T_X St(n,r)$ for $X=X_1$ or $X=X_2$ has a set of basis vectors,
\beqq
v_1=\frac{1}{\sqrt{2}}\left[
\begin{array}{cc}
0  &  -1\\
1 & 0\\
0 &  0
\end{array}
\right],
v_2=\left[
\begin{array}{cc}
0  &  0\\
0 & 0\\
1 &  0
\end{array}
\right],
v_3=\left[
\begin{array}{cc}
0  &  0\\
0 & 0\\
0 &  1
\end{array}
\right].
\eeqq 
From (\ref{eq23}),  the   Riemannian Hessians $H_1, H_2$ can be expressed  in terms of basis $\{v_1, v_2, v_3\}$,
\beqq
H_1
=\left[
\begin{array}{ccc}
(\delta_1+\delta_2)/2 &  0 & 0\\
0 & d_3-d_1+\delta_1  & 0\\
0 &  0 & d_3-d_2+\delta_2
\end{array}
\right],\;
H_2
=\left[
\begin{array}{ccc}
(\delta_1-\delta_2)/2 &  0 & 0\\
0 & d_3-d_1-\delta_1  & 0\\
0 &  0 & d_3-d_2+\delta_2
\end{array}
\right].
\eeqq
Since $X_1=\cP_\cM (B)$, from Prop.~\ref{C=I},
$X_1$ is
the global minimizer,  and also a qualified critical point.
On the other hand,  the optimality of $X_2$ can vary, depending on the choices of $\delta_1, \delta_2$.
\begin{itemize}
\item When $\delta_1\in (0,d_2-d_1)$, then  $ X_2$ is a qualified critical point. When $\delta_1>d_2-d_1$, $X_2$ is no longer a qualified critical point. 
\item  $ X_2$ is a local minimizer,    when $\delta_1\in (0,d_3-d_1)$ and $\delta_2\in (0,\delta_1)$, i.e.,   $H_2$ is positive definite.
\end{itemize}
\end{ex}
%Indeed,  if $c_j<1$ in (\ref{eq_76_1}) holds for some $j$, then there are two choices in constructing  $x_j$,
%\beqq
%x_j=\pm u_j+(\kA-d_1 I)^\dagger b_j.
%\eeqq

%
%\subsection{Strong duality in regularized problems}
%
%Consider the relaxed problem
%\beqq
%\min_X \langle X,AXC\rangle-2\langle X,B\rangle,\; \textrm{ subject to } X^\top X\preceq I. 
%\eeqq
%The constraint set $X^\top X \preceq I$ is convex, since each  $X$ has the operator norm not exceeding $1$. 
%The Lagrangian is 
%\beqq
%L(X, \Lambda)=\langle X,AXC\rangle-2\langle X,B\rangle-\langle \Lambda, X^\top X-I\rangle. 
%\eeqq
%For each $\Lambda$ with $\Lambda\preceq d_r C $, we have
%
\section{Algorithms}\label{sec:3}

\subsection{Gradient projection methods }

Projected gradient methods  or gradient methods with retractions can be used 
to reach a local solution of (\ref{main_P}). 

\begin{itemize}
\item 
 When  $Y\in T_X\cM$ and $X\in St(n,r)$,   singular values of $X+Y$  are greater or equal to  $1$. Hence, 
the metric retraction $\cR_X(Y)$ is actually the projection of $X+Y$ on the convex hull of $St(n,r)$. 
The Euclidean gradient of $\mbf$ is  $
\nabla \mbf=AXC-B$. The standard  gradient projection method, i.e., 
the iterations  
\beqq\label{[]St}
X^{(k+1)}=\cP_\cM(X_k-\alpha(AX_k C-B)),  \textrm{ for $k=1,2,\ldots $, }
\eeqq
can be used to compute a stationary point
with proper  step size $\alpha>0$. The convergence analysis under the Armijo rule can be found in   Prop. 2.3.3\cite{bertsekas_nonlinear_2016}.

\item 
The Riemannian gradient descent method is  one popular method to solve (\ref{main_P}). 
The  Riemannian gradient  at $X$ is given by
\begin{eqnarray}
&&\gd  \mbf(X)=\Pj_X [AXC-B]\\
%& =& (AXC-B)-\frac{1}{2}X( (AXC-B)^\top X+X^\top (AXC-B))\\
& =& (AXC-B)-X\Lambda_{sym},
\end{eqnarray}
where $\Lambda=X^\top (AXC-B)$.
The Riemannian gradient descent is 
   \beqq\label{eq_97}
     X_{k+1}= \cR_{X_k } (-\alpha_k \gd\,  \mbf(X_k));
     \eeqq
     For the gradient method  with the Armijo rule,
the convergence to a stationary point is verified in Theorem 4.3.1\cite{absil_optimization_2008} or Cor. 4.13 \cite{boumal_introduction_2022}. 

  \item When $X_k$ is near a critical point, we can use 
     the Newton method  to speed up the convergence, 
     \beqq\label{Newton}
     X_{k+1}= \cR_{X_k } (\alpha_k Z );
     \eeqq
where 
the Newton direction  $Z\in \IR^{n\times r}$ is computed  from \beqq\label{eq95}
     \Hs\,  \mbf(X)[Z]=-\gd\, \mbf (X),
     \eeqq
     The right-hand side is 
     \begin{eqnarray}
&&     \gd\,  \mbf(X)=\Pj_X \left(\nabla \mbf (X) \right)\\
&=&AXC-B-X\Lambda_{sym}, \; \Lambda=X^\top (AXC-B)
     \end{eqnarray}
and the left-hand side  (    the calculation is based on  (7.29), (7.39) in \cite{boumal_introduction_2022})
is      \begin{eqnarray}
&&     \Hs \mbf(X)[Z]=\Pj_X \left\{ \nabla^2 \mbf(X)  [Z]-Z (X^\top \nabla \mbf(X))_{sym} \right\}
\\
&=&\Pj_X \left\{  AZC-Z\Lambda_{sym}\right\}.\label{eq99}
%&=& \cP_X (AZC-Z\Lambda_{sym})=AZC-Z\Lambda_{sym}-X sym(X^\top (AZC-Z\Lambda_{sym})).
     \end{eqnarray}

%To overcome the difficulty, one can employ safeguard 

%the trust region method as illustrated in  ``An introduction to optimization on the manifolds" by Nicolas Boumal\cite{boumal_introduction_2022}.

\end{itemize}

 From an optimization perspective, the Newton scheme in (\ref{Newton}) can be interpreted as a local version of Sequential Quadratic Programming  (see section 18.1\cite{nocedal_numerical_2006}).
However, when $X_k$ 
 is significantly far from a critical point, the system in equation (\ref{eq95})  may fail to be positive definite, which can render the computation of $Z$ ineffective. In particular, $Z$ may be an ascent direction. In the next section, we will propose a method for determining $\Lambda$ in regularized problems to address this issue.

\subsection{Sequential subspace methods(SSM)}

Manifold optimization involves multiple retractions at each step-length determination, which can be time-consuming, especially when $n$ is much larger than $r$. To address this, sequential subspace methods have been proposed for solving the sphere minimization problem outlined in (\ref{eq_1d'})\cite{hager_global_2005}. 
Specifically, the authors consider incorporating the Newton direction vector and the ground vector into the subspace to ensure convergence to a global minimizer for the sphere minimization problem. 
Further applications of subspace methods can be found in~\cite{CSIAM-AM-2-4}. These methods concentrate on  conducting retractions within a smaller dimensional space, leading to a significant reduction in computational complexity when dealing with large-scale problems.  In this section, we will extend the framework in \cite{hager_global_2005} to address the $St(n,r)$ minimization problem in  (\ref{main_P}), emphasizing the quality of the solutions. 
 
For a general smooth function $\mbf(X)$ on $St(n,r)$, the  idea of SSM is to introduce  a  proper isometric matrix  $V\in St(n,l)$ and
to express $X$ in the form of  \beqq
 \textrm{$X=V\widetilde X$ with $\widetilde X\in St(l,r)$, $r<l<n$ 
}\eeqq
so that 
 the original problem can be approximated by  the low-dimensional problem 
 \beqq
\min_{\widetilde X} \{ \widetilde \mbf(\widetilde X):= \mbf(V \widetilde X)\}.
 \eeqq
More precisely,  (\ref{main_P}) can be handled by the alternating scheme
  \beqq\label{eq82}
 \min_{X\in St(n,r)} \mbf(X; A, B,C)=
\min_{V} \min_{X\in St(n,r)} \{ \mbf(V^\top X; V^\top A V, V^\top B,C): V^\top V=I, \; V\in \IR^{n\times 4r}\}.
\eeqq
The matrix $V$ can be regarded as one descriptor of 
 one subspace   $\cS_k$   of   the  $n\times l$ matrices, whose columns are spanned by
\begin{itemize}
\item column vectors  of $V_g$: the ground eigenvectors  $\{v_1, v_2,\ldots, v_r\}$ of $A$ corresponding to  the smallest  eigenvalues $d_1, \ldots, d_r$,
\item column vectors of  $X_k$,
\item column vectors of  SQP  direction   $Z_k$ (the computation is deferred in section~\ref{3.4.2} ),
\item column vectors of gradient  $\nabla \mbf(X_k)=AX_k C-B$ of the cost function at $X_k$.
\end{itemize}  
The details will be given in the next section.
 In short,  
% The subspace $\cS_k$ is introduced to  reduce the  dimension of manifold optimization problems from $n$ to $4r$, i.e., 
 an approximate solution  $X_{k+1}$  is chosen from a low-dimensional space
  \beqq
\cS_k=\{ [V_g, X_k, AX_k C-B, Z_k ] S: S\in \IR^{4r\times r}\}\subset \IR^{n\times r}.
\eeqq
%Expressing $S$ as $S=[S_1^\top , S_2^\top , S_3^\top , S_4^\top]^\top$, we have $\cS_k=\{V_g S_1+ X_k S_2+( AX_k C-B) S_3+ Z_kS_4: S_1,S_2, S_3, S_4\in \IR^{r\times r}  \}$.
If no confusion, we shall write $\cS_k=span\{ V_g, X_k, AX_k C-B, Z_k\}$.
%Thanks to the dimension reduction $n\to 4r$,  SSM has 
%an apparent advantage in
%reducing the computation effort of the retractions. 
Use one isometric matrix  $V=V_k$, 
from   the qr-factorization of  $[V_g, X_k, AX_k C-B, Z_k ]$ to express 
each $X\in St(n,r)\cap \cS_k$  as $X=V_k \widetilde X$ for some $\widetilde X\in St(4r, r)$.
Introduce   \beqq 
 A_k:= V_k^\top AV_k,\; B_k:=V_{k}^\top B.\eeqq
From (\ref{eq82}),   the minimizer $X_{k+1}$   of
 \beqq\label{eq110}
 \min_{X\in St(n,r)\cap \cS_k} \mbf(X)
=\min_{\widetilde X\in St(4r,r)} \mbf( \widetilde X; V_k A V_k^\top, V_k^\top B,C).
 \eeqq
 can be expressed as
  $X_{k+1}=V_k \widetilde X_k\in \IR^{4r\times r}$, where 
$\widetilde X=\widetilde X_k$ is the minimizer of the \textbf{St(4r,r) problem} induced by $S_k$,\beqq\label{eq100}
\widetilde X_k:=arg\{\min_{\widetilde X \in \IR^{4r\times r}} \mbf(\widetilde X; A_k, B_k,C )\}.
\eeqq
%
%The sequence  $\{ \mbf(X_k)\}_{k=1}^\infty$  monotonically decreases with respect to  $k$.
%Indeed, 
% the monotonic property is the result of the computation:
%  \begin{eqnarray}
%&&\mbf(X_{k+1})=\frac{1}{2}\langle X_{k+1}, AX_{k+1}C-2B\rangle\\
%&\le& \min_{\widetilde X}\{  \frac{1}{2}\langle V_k\widetilde X, A V_k \widetilde X C-2B\rangle=
%\frac{1}{2}\langle \widetilde X,A_k \widetilde X C-2  B_k\rangle\}\\
%&\le& \frac{1}{2} \langle X_k, A X_k C-2B\rangle=\mbf(X_{k}).
%\end{eqnarray}
%
%

The above discussion  outlines the basic  framework of the SSM. However, when dealing  with high-dimensional data,  we need to solve many subspace problems.  A key requirement  is
to achieve fast convergence for  each  subspace problem. 
Achieving fast  convergence relies on   introducing  Newton directions in the subspace $\cS_k$. However,  if the Hessian matrix in (\ref{eq99})  is not  positive semidefinite,
the Newton step may not result in  a decrease in the objective function.
 To address  this issue, Alg.~\ref{SSM_alg} makes  two adjustments to the subspace method described by~\ref{eq110} and ~(\ref{eq110}) 
 \begin{itemize}
 \item   Replacing the objective function $\mbf$ in (\ref{eq82})  with a surrogate regularized  function $\mbf_k$, as shown in (\ref{eq128}). Here, the ground eigenvectors are included in the subspace $\cS_k$ to ensure the solution quality of each subproblem. 
\item 
 Introducing additional orthogonality to $V_g$ when calculating the Newton step.

\end{itemize}

These adjustments  bring  two  advantages. First, as stated in Corollary.~\ref{thm1r}, any qualified critical point is guaranteed to be the global minimum of every low-dimensional subspace problem in (\ref{eq130}).  Secondly, the Hessian matrix associated with the qualified multipliers is positive semi-definite, which allows us to use the conjugate gradient method to calculate the Newton step size from (\ref{eqZ}), and the resulting Newton/SQP direction can be used as a descent direction.

\subsection{Regularized surrogate models}\label{sec:Reg}
%This motivatives the introduction of regularized problems. 
We start with the introduction of a regularized model at the base point $X_k$.
\begin{definition} 
 Let $D:=\kA-A\succeq 0$ and $C\succ 0$.
At  each base point $X_k\in St(n,r)$, introduce one  regularized  model 
$\mbf_k(X)$ of $\mbf(X; A,B,C)$,
 \beqq \label{eq128}
 \mbf_k(X)
 := \frac{1}{2}\langle X, \kA X C\rangle-\langle X, B_k\rangle+\frac{1}{2}\langle X_k, DX_k C\rangle,\textrm{ with }
  B_k=B+D X_kC.
 \eeqq
Here,    $B_k$  is introduced to ensure the identical tangent spaces of $\mbf,\mbf_k$ at $X_k$, i.e.,  the gradient condition $\nabla \mbf(X_k)=\nabla \mbf_k(X_k)$ holds, 
    \beqq\label{eq344k}
  AX_kC-B=\kA X_kC-B_k,\; i.e., B_k=B+(\kA-A) X_kC.
  \eeqq

 \end{definition}
 Observe that 
 for each  $X=X_k$ in $St(n,r)$ with $C\succ 0$, we have 
  \begin{eqnarray}
2\mbf_k(X)
% &:=& \langle X, \kA X C\rangle-2\langle X, B+DX_k C\rangle+\langle X_k, DX_k C\rangle\\
&=&\langle X, A X C\rangle-2\langle X, B\rangle+\langle X_k-X, D(X_k-X) C\rangle\\
&\ge& 2\mbf(X; A,B,C).
\end{eqnarray}
Hence, as shown in 
Fig.~\ref{fig:regular-fig}, $\{ \mbf_k(X)
: X\in \cS_k
\}$ is a surrogate  model which  is tangent to  $\mbf$ at $X_k$.
Thanks to the surrogate property, instead of (\ref{eq100}), we choose  $X_{k+1}$  to be a global minimizer of the regularized problem \beqq\label{eq130}
 \min_X \{ \mbf_k(X): X\in St(n,r)\cap \cS_k \},
\textrm{ where } 
 \cS_k=span\{  X_k, A X_kC-B\}.
 \eeqq 
We have the following monotonic property: 
 \beqq 
\mbf(X_k; A,B,C)=
\mbf_k(X_{k})\ge \mbf_k(X_{k+1})\ge \mbf(X_{k+1}; A,B,C).\eeqq
   
%   If $X_k$ is not a stationary point, then 
%  \beqq\label{eq111}
%\mbf( X_{k+1})\le \mbf_k( X_{k+1})=  \min_{X\in St(n,r) \cap \cS_k} \mbf_k(X)< \mbf_k( X_k)=\mbf( X_k).
%  \eeqq

 %
%
% \beqq
%\mbf_k(X):=\mbf(X ; \kA, B_k,C)+
%\textrm{ constant }
%\ge \mbf(X; A,B,C),
%\eeqq
%where $B_k$ is given by (\ref{eq128}).

\begin{figure}
\begin{center}
\includegraphics[scale=0.4]{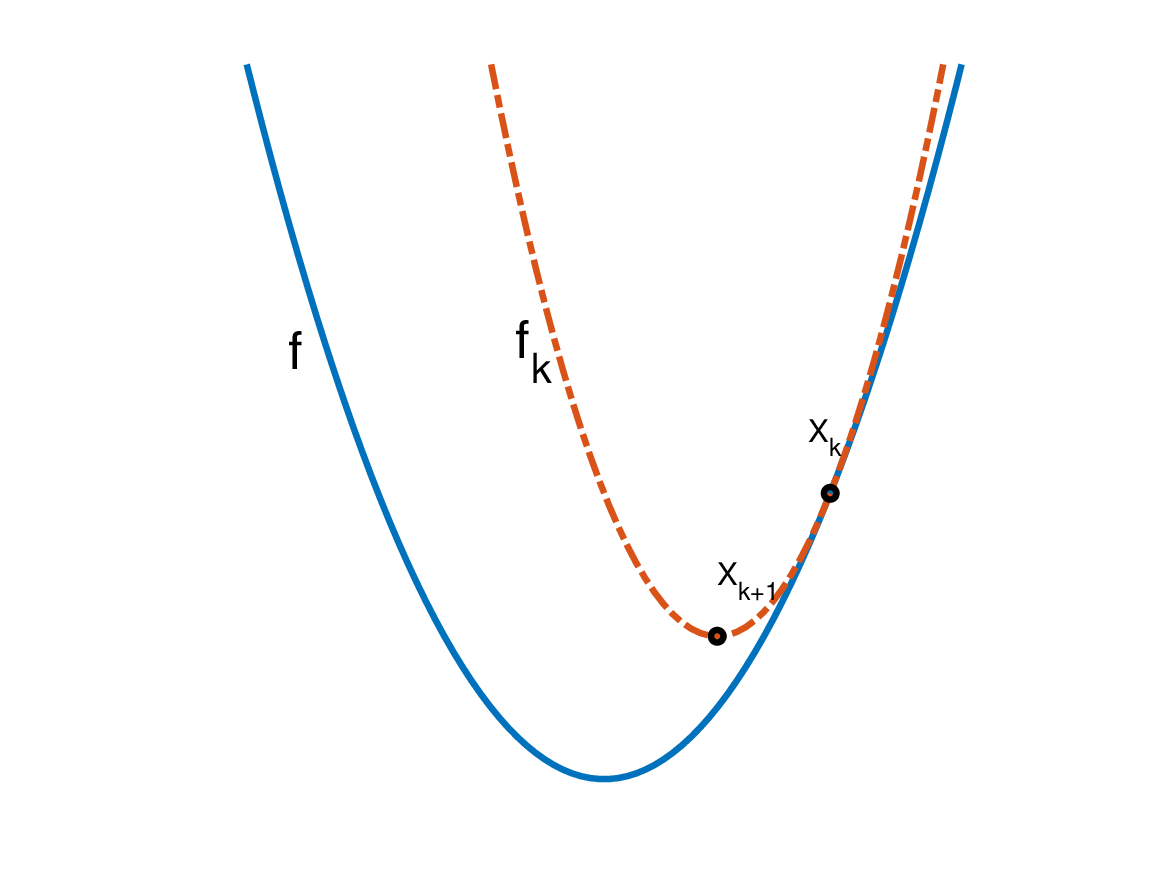}
\end{center}
\caption{Illustration of surrogate models $\mbf_k(X)$ of $\mbf(X; A,B,C)$, where $ \mbf_k(X)$ in (\ref{eq128}) is constructed from the tangent point $X_k$.   The minimizer $X_{k+1}$ of $\min_X\{  \mbf_k(X): X\in \cS_k\}$ provides one estimate for the minimizer of $\min_X \mbf(X; A,B,C)$.}\label{fig:regular-fig}
\end{figure}

 Next, we  show that the solutions to these regularized problems converge to one qualified critical point of the original problem in ~(\ref{main_P}).  The main result in Theorem~\ref{prop_3.3} is the convergence of SSM under the non-degeneracy assumption.
 
%Due to the difference $A-\kA$, 
% the adjustment 
%  $B\to B_k$  is introduced to ensure the gradient condition: 
%    \beqq\label{eq344k}
%  AX_kC-B=\kA X_kC-B_k,\; i.e., B_k=B+(\kA-A) X_kC.
%  \eeqq
%This difference also induces the difference of two multipliers.  
  
\begin{theorem} \label{prop_3.3} For each $k$,
let $X_{k+1}$ be a  qualified critical point  to the regularized problem in  (\ref{eq130}) with  the associated multiplier $\Lambda_{k+1}$,
\beqq\label{eqLa}
  \Lambda_{k+1}=X_{k+1}^\top (\kA X_{k+1} C-B_k).\eeqq
  \begin{itemize}
\item    Let $g_k=\gd \mbf(X_k; A, B, C)$.
Then $
\lim_{k\to \infty } g_k=0$.
\item Suppose $V_g^\top BC^{-1}$ is nonsingular.  Let  $(X_*,\Lambda_*)$ be 
 a limit point of $\{ (X_k,\Lambda_k): k\}$. 
 Then  $X_*$ is a qualified critical point of \beqq
 \min_X \mbf(X; A,B,C).\eeqq
 \end{itemize}
\end{theorem}
\begin{proof}

Introduce a curve  $c(\alpha )=\cR_X(\alpha s)$ on $St(n,r)\cap \cS$ with $s=-\|g_k\|^{-1} g_k$. The Taylor expansion around $X_k$ yields
\beqq
\mbf_k( \cR_X(\alpha s))=\mbf_k(X_k)+\langle \gd \mbf_k(X_k), \alpha s\rangle+\frac{1}{2} \langle \Hs \mbf_k(X_k)[\alpha s], \alpha s \rangle+o(\alpha^2).
\eeqq
Thanks to (\ref{eq128}), (\ref{eqLa}),    $\|B_k\|$ and $\|\Lambda_k\|$ are bounded above on $St(n,r)$.  Then  $\| \Hs \mbf_k(X)\|$ is bounded above  on $St(n,r)$.
Hence, we can find some  constant $M>0$, such that 
\beqq\label{eq138}
\mbf(X_{k+1})\le \mbf_k(X_{k+1})  \le \mbf_k(c(\alpha))\le \mbf_k( X_k)-\|g_k\| \alpha+\frac{1}{2} M\alpha^2
= \mbf( X_k)-\|g_k\| \alpha+\frac{1}{2} M\alpha^2
\eeqq
whenever $\alpha\in [0,0.5]$. Since $\|g_k\|$ is bounded over all choices of $X\in St(n,r)$,  we can choose $M$ sufficiently large  to ensure that $\|g_k\|/M\le 0.5$.
Take  $\alpha=\|g_k\|/M$ in (\ref{eq138}), and 
 we have
$
 \mbf( X_{k+1})\le \mbf( X_k)-\|g_k\|^2/(2M).
$ Thus,  $\lim_{k\to \infty} \|g_k\|=0$ holds according to 
\beqq
 \mbf( X_{k+1})\le \mbf( X_1)-\sum_{j=1}^k\|g_j\|^2/(2M).
\eeqq

By assumption,  $V_g^\top B C^{-1}$ is nonsingular. From Theorem~\ref{safe}, 
$d_r-\gamma_j\ge \sigma$ holds for some $\sigma>0$.
Expanding $\mbf_k$ around $X_k$, we have for each $X\in St(n,r)\cap \cS$,
\beqq\label{eq_99}
\mbf_k(X)=\mbf_k(X_k)+\langle X-X_k, g_k\rangle+\frac{1}{2}\langle X-X_k, \Hs \mbf_k(X) [X-X_k]\rangle.
\eeqq
Hence, 
\beqq\label{eq_100}
\frac{\sigma}{2} \|X_{k+1}- X_k\|^2\le \mbf_k(X_{k+1})-\mbf_k(X_k) -\langle X_{k+1}-X_k, g_k\rangle\le  \| g_k\| \| X_{k+1}-X_k\|.
\eeqq
Together with $
\lim_{k\to \infty} g_k=0$, we have
  $\lim_{k\to \infty} \| X_{k+1}-X_k\|=0 $.
  
 To examine the convergence to a qualified point, we  introduce    a multiplier $\wLam_{k+1}$ associated with $\mbf$, in addition to the multiplier $\Lambda_{k+1}$ associated with $\mbf$,
   \beqq \label{eq138}\wLam_{k+1}=X_{k+1}^\top (A X_{k+1} C-B),\; 
  \Lambda_{k+1}=X_{k+1}^\top (\kA X_{k+1} C-B_k).\eeqq 
As $k\to \infty$,
\beqq\label{eq129}
\wLam_{k+1}-
\Lambda_{k+1}=X_{k+1}^\top (D X_{k+1} C-(B_k-B))=X_{k+1}^\top D(X_{k+1}-X_k) C\to 0.
\eeqq
For each $k$, the global optimality of $X_{k+1}$ ensures $\Lambda_{k+1} \preceq d_r C$. Together with (\ref{eq129}),
the limit  $\wLam_*$ of $\{\wLam_{k+1}\}$ satisfies  $\wLam_*=\Lambda_*\preceq d_r C$.

\end{proof}

\subsection{Subspace selection}
To effectively calculate  qualified points,  we will incorporate $V_g$ into  the subspace $\cS$,
\beqq\label{eq147}
\min_{X\in \cS\cap St(n,r)} \left\{ \mbf_k(X)=\mbf_k(X; \kA, B_k, C )+\frac{1}{2}\langle X_k, (\kA-A) X_k C\rangle\right\}.
\eeqq
       This inclusion of $V_g$ can prevent stagnation at non-global local solutions. 
    \begin{prop}
    Suppose $\bar X$ is a stationary point of   (\ref{eq147})  and $\bar X$ is not a global minimizer. For
    $
    span\{ V_g, \bar X\}\subset \cS $, i.e., $\{ V_g U_1+\bar X U_2: \; U_1\in \IR^{r\times r}, \; U_2\in \IR^{r\times r}\}\subset\cS$, we have
    \beqq\label{eq152}
    \min_{X\in \cS\cap St(n,r)} \mbf_k(X)\le \mbf_k(\bar X). 
    \eeqq 
    \end{prop}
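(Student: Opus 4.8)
The plan is to reduce the claim to the low-dimensional Stiefel problem obtained by compressing $\mbf_k$ to the subspace $W:=\mathrm{span}\{V_g,\bar X\}\subseteq\cS$, and then to exploit the ``qualified $\Leftrightarrow$ global'' dichotomy that Corollary~\ref{thm1r} and Theorem~\ref{Thm1} supply for any regularized ($\kA$-type) model. Since $\bar X$ is itself feasible, the stated inequality (\ref{eq152}) is vacuous unless it is read as \emph{strict}; so what I would actually prove is $\min_{X\in\cS\cap St(n,r)}\mbf_k(X)<\mbf_k(\bar X)$, which is the content ``inclusion of $V_g$ escapes a non-global stationary point.''

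First I would record that the multiplier of a critical point is intrinsic. Writing $\bar\Lambda=\bar X^\top(\kA\bar X C-B_k)$, this same matrix is the multiplier of $\bar X$ whether $\bar X$ is viewed in the full problem or in any subspace problem containing it, because $\nabla\mbf_k(\bar X)=\kA\bar X C-B_k$ does not depend on the ambient subspace. Next, since $\bar X$ is assumed not to be a global minimizer, the contrapositive of Corollary~\ref{thm1r} (for the $\kA$-model, qualified $\Rightarrow$ global) forces $\bar X$ to be \emph{not} qualified, i.e.\ $\bar\Lambda\not\preceq d_r C$.

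The heart of the argument is the spectral behaviour of the compression. Let $\Pi\in St(n,m)$ be an isometric basis of $W$ (so $r\le m\le 2r$) and set $\kA_W:=\Pi^\top\kA\Pi$; I claim the $r$-th smallest eigenvalue of $\kA_W$ is again $d_r$. Indeed each ground eigenvector $v_j$ lies in $W$ and satisfies $\kA v_j=d_r v_j$ (the regularization (\ref{eq5}) lifts $d_1,\dots,d_r$ all to $d_r$); since $\Pi\Pi^\top$ is the orthogonal projector onto $W$, we get $\kA_W(\Pi^\top v_j)=\Pi^\top\kA\Pi\Pi^\top v_j=\Pi^\top\kA v_j=d_r(\Pi^\top v_j)$, so $\kA_W$ has at least $r$ eigenvalues equal to $d_r$. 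On the other hand the Rayleigh quotient bound gives $\lambda_{\min}(\kA_W)\ge\lambda_{\min}(\kA)=d_r$, so every eigenvalue of $\kA_W$ is $\ge d_r$, and hence the bottom $r$ are exactly $d_r$. Thus the compressed problem $\min_{\widetilde X\in St(m,r)}\mbf_k(\Pi\widetilde X)$ is again a regularized model whose $r$-th smallest eigenvalue is $d_r$, with multiplier $\bar\Lambda$ at the point representing $\bar X$. Now suppose for contradiction that $\bar X$ were a global minimizer of this compressed problem; then Theorem~\ref{Thm1}, applied to it, would force $\bar\Lambda\preceq d_r C$, contradicting the previous paragraph. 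Hence $\bar X$ is not a global minimizer of the compressed problem, so there is an $X$ with columns in $W\subseteq\cS$, $X^\top X=I_r$, and $\mbf_k(X)<\mbf_k(\bar X)$, which yields (\ref{eq152}) (strictly).

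I expect the main obstacle to be the spectral step: one must verify simultaneously that the ground eigenvectors survive the compression as $d_r$-eigenvectors of $\kA_W$ and that no eigenvalue drops below $d_r$, so that the bound from Theorem~\ref{Thm1} for the compressed problem is still governed by $d_r C$ rather than by some smaller compressed eigenvalue. A more hands-on alternative --- rotating $\bar X$ toward a ground eigenvector $v_g$ in the plane spanned by $\bar X u$, where $u$ is the eigenvector of $C^{-1/2}\bar\Lambda C^{-1/2}$ whose eigenvalue exceeds $d_r$, and checking via the exact quadratic expansion of the type (\ref{eq27_1}) that $\mbf_k$ decreases --- is appealing and parallels the sphere case in Proposition~\ref{1.4}, but it runs into the difficulty that $\mathrm{span}\{V_g\}$ need not meet $\mathrm{col}(\bar X)^\perp$, which is precisely what the compression argument sidesteps.
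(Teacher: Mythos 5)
Your proposal is correct in its core and takes a genuinely different route from the paper. You compress the problem onto $W=\mathrm{span}\{V_g,\bar X\}$, verify that $\kA_W=\Pi^\top \kA \Pi$ keeps $d_r$ as its $r$-th smallest eigenvalue and that $\bar\Lambda$ survives as the multiplier of $\Pi^\top\bar X$, and then play the two halves of the qualified/global dichotomy against each other (Corollary~\ref{thm1r} to get $\bar\Lambda\not\preceq d_rC$, Theorem~\ref{Thm1} on the compressed problem to reach a contradiction). The paper instead runs exactly the ``hands-on alternative'' you set aside: it diagonalizes $C^{-1/2}\bar\Lambda C^{-1/2}=U\Gamma U^{-1}$, sets $\bar Y=\bar X C^{1/2}U=[\bar y_1,\ldots,\bar y_r]$, and chooses a unit $v_1\in \mathrm{col}(V_g)$ orthogonal to $\bar y_2,\ldots,\bar y_r$ --- only $r-1$ linear conditions on an $r$-dimensional space, which is precisely how the obstruction you feared ($\mathrm{span}\{V_g\}$ need not meet $\mathrm{col}(\bar X)^\perp$) is bypassed. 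If $\xi_1:=v_1^\top\bar y_1\neq 0$, the Householder reflection $\bar y_1\mapsto \bar y_1-2\xi_1 v_1$ yields $X\in \mathrm{span}\{V_g,\bar X\}\cap St(n,r)$ with $\mbf_k(X)-\mbf_k(\bar X)\le 2\xi_1^2(d_r-\gamma_1)<0$; if $\xi_1=0$, then $v_1e_1^\top U^\top C^{-1/2}$ is tangent at $\bar X$ and a second-order expansion gives the decrease $\alpha^2(d_r-\gamma_1)+O(\alpha^3)<0$. The paper's argument is constructive and self-contained (the same reflection in fact furnishes a proof of Theorem~\ref{Thm1} itself), whereas your reduction is shorter and more conceptual but inherits its validity from Theorem~\ref{Thm1}. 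Your reading of (\ref{eq152}) as a strict inequality is the right one and matches what the paper actually proves.

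There is, however, one case your argument does not cover: $\mathrm{col}(\bar X)\subseteq\mathrm{span}(V_g)$, so that $m=\dim W=r$ and the compressed problem lives on $St(r,r)=\cO_r$. There Theorem~\ref{Thm1} cannot be invoked as stated: the paper's standing hypothesis $d_{r+1}>d_r$ is not even formulable for an $r\times r$ system matrix, and the theorem's proof goes through $X_\bot\in\IR^{m\times(m-r)}$, which is empty. Such points genuinely occur; e.g.\ for $r=1$ (where $\kA=A$, $B_k=B$) take $B=\beta v_1$ with $\beta>0$: then $\bar X=-v_1$ is stationary, non-global, not qualified, and $W=\mathrm{span}(v_1)$. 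The patch is short: in this case $\kA_W=d_rI_r$, so the quadratic term equals the constant $\frac{1}{2}d_r\, tr(C)$ and the compressed problem reduces to maximizing $tr\bigl((\Pi^\top B_k)^\top \widetilde X\bigr)$ over $\cO_r$; at any maximizer $\widetilde X^\top \Pi^\top B_k$ is symmetric positive semidefinite, so the multiplier is $d_rC-\widetilde X^\top\Pi^\top B_k\preceq d_rC$, and your contradiction goes through. Relatedly, for $m>r$ you should also record why the compressed problem inherits the spectral gap $\lambda_{r+1}(\kA_W)>d_r$ assumed throughout the paper: any unit $u\in W$ with $u^\top \kA u=d_r=\lambda_{\min}(\kA)$ must lie in the $d_r$-eigenspace of $\kA$, namely $\mathrm{span}(V_g)$, so the $d_r$-eigenspace of $\kA_W$ is exactly $r$-dimensional. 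With these two additions your proof is complete.
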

    \begin{proof}
    
    Let $\bar \Lambda$ be the multiplier in the regularized problem associated with the stationary point $\bar X$. Then 
    \beqq\label{mbf_k}
    \mbf_k(X)=\mbf_k(\bar X)+\frac{1}{2} \langle (X-\bar X),  \kA(X-\bar X) C \rangle-\frac{1}{2}\langle (X-\bar X)\bar \Lambda, (X-\bar X) \rangle.  
    \eeqq
    Claim: Assuming that 
    $\bar X$ is not a global minimizer, i.e., 
    $\bar\Lambda C^{-1}$ has at least one eigenvalue $\gamma_1$ that exceeds  $d_r$, if necessary, we can use  a Householder reflection to  construct $X_H \in \cS\cap St(n,r)$, which has a lower  objective value. 
    
    To reveal the structure of $\mbf_k$,  we introduce an orthogonal eigenvector matrix  $U$ to diagonalize  $\bar \Lambda$ in the right-hand side of (\ref{mbf_k}). Introduce
    \beqq \Gamma:=U^\top C^{-1/2} \bar \Lambda C^{-1/2} U=\diag(\gamma_1,\ldots, \gamma_r)\eeqq and  \beqq  Y:= XC^{1/2} U,\;
    \bar Y:=\bar XC^{1/2} U=[\bar y_1,\bar y_2,\ldots, \bar y_r].\eeqq
    Hence, we can express $\mbf_k(X)$ as a function $Y$,  
    \beqq
    \mbf_k(X)=\mbf_k(\bar X)+\frac{1}{2} \langle (Y-\bar Y),  \kA(Y-\bar Y)  \rangle-\frac{1}{2}\langle (Y-\bar Y)\Gamma, (Y-\bar Y) \rangle.  
    \eeqq
   
   Since  $V_g$ has rank   $r$,  we may choose one unit vector $v_1$ in the column space of $ V_g$, which is orthogonal to $\bar y_2,\ldots, \bar y_r$.  Starting with $v_1$, we construct a set of  unit orthogonal bases $\{v_1,v_2,\ldots, v_r\}$ of the column space of $V_g$. Together with the eigenvectors $\{v_{r+1},\ldots, v_n\}$ of $A$, we have a set of unit  orthogonal basis $\{v_1,\ldots, v_n\}$ in $\IR^n$. Write $\bar y_1=\sum_{j=1}^n \xi_j v_j$ for some scalars $\xi_j$.
To proceed, we will consider two cases.   
 \begin{itemize} 
 \item  Suppose $\xi_1\neq 0$.
   Take $Y_H=[(y_1)_H,\bar y_2,\ldots, \bar y_r]$ with 
    \beqq
   (y_1)_H=\bar y_1-2 \xi_1 v_1=-\xi_1 v_1+\sum_{j=2}^n \xi_j v_j.
   \eeqq 
   The transformation  $\bar Y\to Y_H$ can be regarded as  one Householder reflection about the hyperplane orthogonal to $v_1 e_1^\top$, where $e_1=[1,0,\ldots, 0]^\top \in \IR^r$.
   Let  $X_H=Y_H U^\top C^{-1/2}$.
   Clearly,   $X_H\in St(n,r)$, since
   \begin{eqnarray}
 &&X_H^\top X_H=(Y_HU^\top C^{-1/2})^\top(Y_HU^\top C^{-1/2})\\
 &=& (\bar YU^\top C^{-1/2})^\top(\bar YU^\top C^{-1/2})=\bar X^\top  \bar X=I,
    \end{eqnarray}
    where we used
   \beqq
    Y_H^\top  Y_H=(\bar Y-2\xi_1 v_1 e_1^\top )^\top (\bar Y-2\xi_1 v_1 e_1^\top )=\bar Y^\top \bar Y.
    \eeqq 
    Observe that $X_H-\bar X=(Y_H-\bar Y) U^\top C^{-1/2}=-2\xi_1 v_1 e_1^\top  U^\top C^{-1/2}$ implies   $ X_H\in \cS$. Thus (\ref{eq152}) holds, 
    \begin{eqnarray}
&&    \mbf_k(X)-\mbf_k(\bar X)=\frac{1}{2} \langle (X-\bar X),  \kA(X-\bar X) C \rangle-\frac{1}{2}\langle (X-\bar X)\bar \Lambda, (X-\bar X) \rangle\\
&=&\frac{1}{2} \langle -2\xi_1 v_1 e_1^\top ,  A(-2\xi_1 v_1 e_1^\top   )  \rangle-\frac{1}{2}\langle (-2\xi_1 v_1 e_1^\top  )\Gamma, (-2\xi_1 v_1 e_1^\top ) \rangle\\
&\le &2\xi_1^2 (d_r-\gamma_1)<0.
    \end{eqnarray}
    
  \item   Suppose  $\xi_1=0$, i.e., $v_1$ is orthogonal to  all the vectors $\bar y_1,\ldots, \bar y_r$.  Then we have  \beqq
    V'=v_1 e_1^\top U^\top C^{-1/2} \in T_{\bar X} St(n,r).\eeqq 
    Indeed, $\Pj_{\bar X}(V')=V'-\bar X (\bar X^\top V')_{sym}=V'$,
    where ${V'}^\top \bar X ={V'}^\top \bar Y  U^\top C^{-1/2}=0$. Under this circumstance, we can show that $\bar X$ is actually  not a local minimizer of  (\ref{eq152}). Consider a smooth curve $c(\alpha)\in St(n,r)$ passing through $c(0)=\bar X$ and $c'(0)=V'$.
    Then  we have (\ref{eq152}), since
    for sufficiently small $\alpha>0$, from (\ref{eq19}),
    \begin{eqnarray}
&&    \mbf_k(c(\alpha))=\mbf_k(\bar X)+ \alpha^2 \langle V', \Hs \mbf_k(\bar X) V'\rangle +O(\alpha^3)\\
&=&\mbf_k(\bar X)+\alpha^2 (d_r-\gamma_1) +O(\alpha^3)< \mbf_k(\bar X).
    \end{eqnarray}
    \end{itemize}
         \end{proof}
%    
%\begin{rem} A Newton step $Z$ in the space $V_\bot$ satisfies $P_\bot A P_\bot Z-Z\Lambda=- P_\bot \gd \mbf(\bar X)$. Consider a curve $c(\alpha)=[\bar X+\alpha Z]_+$.
%Then $\langle A\bar X C-B, Z\rangle =0$ implies
% $P_\bot \gd \mbf(\bar X)=0$. Likewise, consider a curve $c(\alpha)=[\bar X+\alpha V_g V_g^\top g]_+$. We have $V_g^\top \gd \mbf(\bar X)=0$.
%\end{rem}
%

  \subsubsection{SQP direction   $Z_k$}\label{3.4.2}
  To accelerate convergence of SSM in solving (\ref{eq147}), we will  judiciously incorporate the Newton step (SQP direction) $Z_k$  into the subspace $\cS_k$.  This strategic approach is designed to ``maximize the reduction of the low-dimensional regularized model" $\mbf_k(X):=\mbf_k(X; \kA,B_k,C)$ within the tangent space at $X_k\in St(n,r)$.
    \begin{prop}[SQP direction]
  Let $X_{k+1}$ be the minimizer  of the unconstrained objective function 
 \beqq
arg\min_{X\in \IR^{n\times r}} \left\{ 
\mbm(X):=\mbf_k(X)-\frac{1}{2}\langle \Lambda_k, X^\top X  -I\rangle\right\},
\eeqq
where $\Lambda_k$ is symmetric. 
Introduce some  $Z_k\in \IR^{n\times r}$ to
express the minimizer $X$ as 
\beqq
X=X_k +Z_k+V_g u_k
\eeqq
for   some $u_k$ in $\IR^{r\times r}$. 
Assume the orthogonality between the column space of $Z_k$ and the column space of  $V_g$. 
Let $P$ denote the projection \beqq 
P=I-V_{g} V_{g}^\top.\eeqq 
% Introduce    $    E_k:=(-\kA X_k C+B_k+X_k\Lambda_k) $.
 Then $Z_k$
satisfies $P Z_k=Z_k$ and
 \beqq\label{eqZ}
    P \kA P Z_k C- Z_k\Lambda_k =PE_k,\; \textrm{ where }  E_k:=-\kA X_k C+B_k+X_k\Lambda_k.
    \eeqq
  
\end{prop}

\begin{proof}
Express the minimizer $X$ as 
$
X=X_k+z
$
for some $z\in \IR^{n\times r}$. Expanding the function $\mbm$ around $X_k$, introduce $\mbm_k(z):=\mbm(X)$. 
Thanks to 
the computation
    \begin{eqnarray}
    \mbm_k(z)
&=&\frac{1}{2}\langle \kA(X_k+z), (X_k+z) C\rangle-\langle X_k+z, B_k\rangle-\frac{1}{2}\langle \Lambda_k, (X_k+z)^\top (X_k+z) -I\rangle\\
&   =&\mbm_k(0 )+\langle z, \kA X_k C-B_k-X_k\Lambda_k\rangle+
\frac{1}{2}(\langle \kA z C,z\rangle-
\langle \Lambda_k, z^\top z \rangle)\\
&   =&\mbm_k(0 )-\langle z, E_k \rangle+
\frac{1}{2}(\langle \kA zC,z\rangle-
\langle \Lambda_k, z^\top z \rangle),\label{eq429}
    \end{eqnarray}   
    the minimizer   $z$  satisfies
\beqq\label{eq145}
\kA z C-z\Lambda_k=E_k.\eeqq
Decompose  $z$ as $z= u'+Z_k$ for some $u'$ in the space of $V_g$ and $Z_k$ in the space orthogonal to $V_g$.
    Applying $P$ on the both sides of (\ref{eq145}), we have the equation of $Z_k$ in (\ref{eqZ}).

\end{proof}
\begin{rem}\label{rem3.3} In simulations,  calculating the Newton step is the most computationally intensive part of  the entire algorithm. The orthogonality between $Z_k$ and $V_g$ is introduced to ensure one positive definite system in (\ref{eqZ}), provided that   
 $\Lambda_k \preceq d_r C$ holds at each qualified point. 
 This approach allows us to use conjugate gradient methods to determine a Newton direction, which facilitates the rapid convergence of the SSM.
  \end{rem}

\subsubsection{Initialization in  SSM}
 The following discusses an effective initialization of $(X,\Lambda)$ in SSM.
 Observe one  lower bound for $\mbf(X; \kA, B, C)$: for any $X\in St(n,r)$, 
 \beqq\label{LB}
 \mbf(X; \kA, B, C)\ge \frac{1}{2}(tr(C)d_r)-\|B\|_*.
 \eeqq
The lower bound is attained if columns of $B$ lie in $V_g$ and $X=P_\cM (B)$. 
%(Observe the difference from  Prop.~\ref{C=I})

\begin{prop}[Initialization] \label{2.13}
Consider 
the approximate  regularized problem \beqq\label{eq72}
\min_X \{ \mbf(X; \kA, V_g V_g^\top B,C): X\in St(n,r)\}.
\eeqq
%\begin{itemize}
%\item When columns of $B$ lie in $V_g$, then
%the minimizer of $\min \mbf(X; \kA, B,C )$ is $X$ whose columns  lies in $V_g$.
%\item 
Then $X=P_\cM( B)$ is a global minimizer and 
 the multiplier $\Lambda=X^\top (\kA XC-V_g V_g^\top B)$ satisfies  $\Lambda \preceq d_r C$.
 %\end{itemize}
\end{prop}
\begin{proof}Consider $X$ in the form of   $X=V_g Q$, where
 $Q$ is one orthogonal matrix. The condition of  maximize $\langle Q, V_g^\top B  \rangle$ is the polar decomposition, i.e.,   $Q^\top V_g^\top B $ is symmetric and positive semidefinite (See \cite{horn_matrix_1985-1}). 
Since the lower bound in (\ref{LB}) is reached, we have the optimality of $X$.
 Note that 
\beqq
 \Lambda=X^\top (\kA XC-B)=d_r C-Q^\top V_g^\top B 
\eeqq
is symmetric and $\Lambda\preceq d_r C$.
Hence, 
 $\Lambda$ can be expressed as the difference between two Hermitian  matrices, and by 
 Weyl's inequality, the proof is complete.
 \end{proof}

     \subsubsection{ Outline of Algorithm}
%     Let $\cS_g$ be the subspace spanned by  the \textbf{ground}  eigenvectors of $A$. 
  %   Let $\{v_1,\ldots, v_n\}$ be a set of orthogonal eigenvectors of $A$ with eigenvalues $d_1\le d_2\le \ldots \le d_n$.   Let $V_g=[v_1,\ldots, v_r]$.
      %be the matrix associated with $\cS_g$. 
Based on the above discussion, we propose the following  algorithm to compute a qualified critical point of (\ref{main_P}).

\begin{algorithm}[H]\label{SSM_alg}
  \KwData{
  Symmetric matrix $A\in \IR^{n\times n}$ with spectral gap $d_r<d_{r+1}$. Let $  B\in \IR^{n\times r}$ and  $ C\in \IR^{r\times r}$ with $C\succeq 0$. Stopping criterion parameters: $\epsilon>0$ and $k_{\max}\in \IZ$.   \\ Construct $V_g=[v_1,\ldots, v_r]$.  Use $V_g$ to construct  $\kA$ from (\ref{eq5}). 
  }
  \KwResult{  A qualified critical point $X_k\in \cM:=St(n,r)$.}
  Initialization:  $k=1$, $X_1=\cP_\cM (V_g V_g^\top B)$.
      $\Lambda_1=X_1^\top \kA X_1 C-X_1^\top B\preceq d_r C$.
  \\
  \While{$\|\gd \mbf(X_k; A,B,C)\|>\epsilon$ or $k\le k_{\max}$}{ 
 Use conjugate gradient methods to compute  $Z_k$  in (\ref{eqZ}). \\
            Let  $\cS_{k}=span\{ V_g, X_{k}, AX_{k} C-B,Z_k\}$ and
% Each $\cS_k$ consists of $4r$ basis vectors from columns of $\{ V_g, X_k, AX_k C-B_k,Z\}$.
let  $V_{k}\in \IR^{n\times 4r}$ be the isometric matrix associated with  $\cS_{k}$. 
        %\State 
        Compute $B_k$ from (\ref{eq344k}) and $\kA_k,\widetilde B_k$ from (\ref{eq159}).  \\
        Let $X_{k+1}=V_k \widetilde X$, where $\widetilde X\in \IR^{4r\times r}$ is the solution of $St(4r,r)$ problem in   (\ref{eq170}) via  Alg.~\ref{SSMcore}.
         \\
%       a qualified minimizer $X_{k+1}$   in (\ref{eq147}) subject to  $\cS=\cS_k$. Equivalently, solve (\ref{eq170}).   \\
       $k\to k+1$.
         }
  \caption{Sequential subspace methods to solve (\ref{main_P})}
\end{algorithm}

{
\begin{rem}[Approximate qualified critical points]\label{rem3.5}
Various algorithms can be used to solve the low-dimensional problem in  (\ref{eq170}). Our simulation uses 
 Riemannian Newton methods described in the next subsection. 
 In actual floating-point operations, when we use Algorithm~\ref{SSMcore} to solve the SSM subproblem, we can exit the while-loop once we reach a suitable approximation point. That is,  we take $Y_j$ as   an approximate qualified critical point $X_{k+1}=Y_j$, if    \beqq\label{eq:eq133}
\mbf(X_k)-\mbf(X_{k+1}) \ge \mbf_k(X_k)-\mbf_k(X_{k+1})\ge c\| \gd \mbf_k(X_k)\|^2=c\| \gd \mbf(X_k)\|^2
\eeqq 
holds for  a constant $c>0$ (independent of $k$),\footnote{The inequality  can be  obtained  under proper line search. For instance, see section 4.5 in \cite{boumal_introduction_2022}. }  
 and the multiplier $\Lambda_{k+1}:=X_{k+1}^\top (\widetilde A X_{k+1} C-B_k)$ satisfies the cone condition
  \beqq (\Lambda_{k+1})_{sym}\preceq d_rC.\eeqq 
  With slight adjustments, the convergence to a qualified critical point can still be obtained based on the proof of Theorem~\ref{prop_3.3}.
    The main difference is that the current multiplier $\Lambda_{k+1}$ is generally not symmetric.
   Here, the positive constant $c$ (independent of $k$) is a sufficiently small value,  depending on an upper bound of $\|\Hs \mbf(X)\|$ on $St(n,r)$. 
 Telescoping inequalities in (\ref{eq:eq133}) for  $k$ gives
 the convergence   $
\lim_{k\to \infty} \gd \mbf(X_k)=0$.  Let $X_*$ be a limit of $\{X_k: k\}$. Thus, $\gd \mbf(X_*)=0$ and
\beqq
AX_* C-B=X_* \Lambda_*
\eeqq
holds for some symmetric matrix $\Lambda_*$.
With $\sigma>0$, introducing $\widetilde \Lambda_{k+1}$ in (\ref{eq138}) and
applying the arguments used from (\ref{eq_99}) to (\ref{eq129}),  we have  $\lim_{k\to \infty} \|X_{k+1}-X_k\|=0$ and 
 $\lim_{k\to \infty} \|\widetilde \Lambda _{k+1}-\Lambda_{k+1}\|=0$. 
Thus $\Lambda_*:=\lim_{k\to \infty} \widetilde \Lambda_{k+1}=\lim_{k\to \infty} \Lambda_{k+1} \preceq d_r C$. Thus, $X_*$ is a qualified critical point. 
\end{rem}
}

%  Here is     the structure of the overall algorithm:
%  \begin{itemize}
%  \item       Initialization: Choose
%      \beqq
%      X_1=\cP_\cM (V_g V_g^\top B).
%      \eeqq
%      %%
%      Then $\Lambda_1=X_1^\top \kA X_1 C-X_1^\top B\preceq d_r C$ from Prop.~\ref{2.13}.
%    \item For $k=1,2,3,\ldots$,
%    repeat three steps   to adjust the subspaces $\{\cS_k\}$, until the optimal solution of (~\ref{Task}) lies in the subspace.  
%From Prop.~\ref{3.4},      a limit of the sequence $\{X_k\}$ is a qualified critical point $X_*$ of (\ref{Task}).
%      \begin{itemize}
%      \item (i)  At the   base point $X_{k}$, 
%               update  the Newton step $Z_k$ from (\ref{eqZ})  in the subspace orthogonal to $\{ X_{k},V_g\}$. 
%      The Newton step $Z_k$ is expected to produce  the maximum decrease of the model  in (\ref{eq147}). 
%
%\item (ii) Let  $\cS_{k}=span\{ V_g, X_{k}, AX_{k} C-B,Z_k\}$. 
% Each $\cS_k$ consists of $4r$ basis vectors from columns of $\{ V_g, X_k, AX_k C-B_k,Z\}$.
%Let  $V_{k}$ be the isometric matrix from the q-factor of the basis vectors in $\cS_{k}$. 
%      
%      
%      \item (iii) The core of SSM is to solve 
%       a global  minimizer $X_{k+1}$   in (\ref{eq147}) subject to  $\cS=\cS_k$, which is equivalent to  the problem in (\ref{eq170}).  Various algorithms can be employed, the dual projection method in section~\ref{dual}, Riemannian Newton method, or other methods,  to ensure that  the associated multiplier $\Lambda_{k+1}$  meets the condition $ \Lambda_{k+1}\preceq d_r C$.
%  
%
%\end{itemize}
%\end{itemize}
% 

\subsection{$St(4r,r)$-problems}\label{dual}
%
% Let 
%$V$ be an  isometric matrix, $V\in St(n,l)$, $n>l\ge r$.
% Let  $\cS$ be the induced subspace, $\cS=\{V \widetilde X: \widetilde X\in St(l,r)\}$. 
% 
% 
%Consider the subspace-restricted  regularized problem,
%\beqq\label{eq71}
%\min_X \{ \mbf( X;  \kA, B,C) : X=V\widetilde X\in \cS, X\in St(n,r)\}
%=\min_{\widetilde X} \{ \mbf(\widetilde X; V^\top \kA V, V^\top B,C ): \widetilde X\in St(l,r) \}.
%\eeqq
%  Here, the  objective in (\ref{eq71}) is one surrogate model for $\mbf(X; A,B,C)$ thanks to  $\kA\succeq A$. 
% On the other hand,  
% 
%The problem in (\ref{eq170}) is called the  SSM core problem. 
%%Generally speaking, the computational efforts of $(\widetilde X,\Lambda)$ is quite cheap.
%%We shall demonstrate a few algorithms in section~\ref{dual}.
% %\end{rem}
% 
% 

 In this section, we shall describe   the  computation of  one qualified critical point $X=X_{k+1}$ of 
 the regularization problem in (\ref{eq147}).
% \beqq\label{eq174}
% \min_X \mbf_k(X).
% \eeqq
% Similar to 
%(\ref{eq110}),
Let \beqq \label{eq159}
\kA_k=V_k^\top  \kA V_k, \widetilde B_k=V_k^\top B_k.
\eeqq The update of the base point is
given by $X_{k+1}=V_k \widetilde X$ in (\ref{eq147}), where  $\widetilde X$ is  the solution to the \textbf{ St(4r,r)-problem}
\beqq\label{eq170}
\min_{\widetilde X} \left\{\mbf(\widetilde X; \kA_k, \widetilde B_k, C): \; \widetilde X\in St(4r, r)\right\}.
\eeqq

\subsubsection{Riemannian Newton methods}
Suppose   $X_{k+1}$ is in the proximity of  $X_k$.  The  Riemannian Newton method stated in Alg.~\ref{SSMcore} is a fast algorithm to solve   the  low-dimensional  \textbf{ subspace problem}, 
\beqq\label{eq185} X_{k+1}=arg\min_X \mbf_k(X; \kA, \widetilde B_k, C),\eeqq 
where $\mbf_k$ is the regularized model based at $X_k\in St(4r,r)$. Let $\Lambda_{k+1}$ be
the associated multiplier with  $ \Lambda_{k+1}\preceq
d_r C$.

 To avoid the subscript/notion confusion with previous discussions, we drop the tilde notion and the subscript $k$ in (\ref{eq185}), and replace $(X,\Lambda)$ with $(Y,\Xi)$. Write (\ref{eq185}) as 
 \beqq\label{eq185'} Y^*=arg\min_Y \mbf(Y; \kA,B, C) \in St(4r,r).\eeqq
 The optimal condition is that 
 \beqq\label{eq178}
 \kA Y^* C-Y \Xi^*=B
 \eeqq
 holds for some symmetric multiplier $\Xi^*$ with $\Xi^*\preceq d_r C$.
 The optimal solution $Y^*$ is expected to be near   the current base point $X_k$.  
The  Riemannian Newton method uses a Newton direction $Z=Z_j$ to iterate $Y=Y_j$, 
\beqq
Y_j\to Y_{j+1}=\cR_{Y_j} (\alpha Z_j).
\eeqq
 To impose the qualified condition  during the iterations,
 it is convenient to introduce the following safeguard mapping  $\mbs(\Xi)$,
according to  Prop.~\ref{safe}.

\begin{definition}Let $\sigma$ be the smallest singular value of  $V_g^\top \widetilde B_k C^{-1}$.  Suppose $\sigma>0$.
Introduce the following safeguard mapping  $\Lambda\to \mbs(\Lambda)$, so that $\mbs(\Lambda)\preceq d_r C$. 
Diagonalize   $\Lambda$ as \beqq
C^{-1/2}\Lambda C^{-1/2}=U\Gamma U^{-1},\; 
\Gamma=\diag(\gamma_1, \ldots, \gamma_r).
 \eeqq 
Update 
\beqq
\Lambda\to  \mbs(\Lambda):=C^{1/2} U\diag(\widetilde\gamma_1,\ldots, \widetilde\gamma_r) U^{-1} C^{1/2},
\eeqq
where
 \beqq
     \widetilde \gamma_i=\min(\gamma_i, d_r- \sigma),\; i=1,\ldots, r.
    \eeqq 
\end{definition}

\begin{rem}
The equation in (\ref{eq190}) is the linearization of (\ref{eq178}) with $Y^*=Y_j+Z_j$.
Thanks to the safeguard $\mbs$ on $\Xi_j$, the system in (\ref{eq190}) is positive semi-definite in $Z$. Thus, $Z_j$ can be determined by conjugate gradient methods. 

\end{rem}

\begin{algorithm}[H]\label{SSMcore}
  \KwData{
  Symmetric matrix $A=\kA_k \in \IR^{4r\times 4r}$ with spectral gap $d_r<d_{r+1}$. $  B=\widetilde B_k\in \IR^{4r\times r}$ from $V_k$ and (\ref{eq159}).   $ C\in \IR^{r\times r}$ with $C\succeq 0$. Stopping criterion parameters: $\epsilon>0$ and $j_{\max}\in \IZ$.  % \\ Construct $V_g=[v_1,\ldots, v_r]$.  Use $V_g$ to construct  $\kA$ from (\ref{eq5}). 
  }
  \KwResult{  A qualified critical point $\widetilde X=Y_j\in \cM:=St(4r,r)$.}
  Initialization:  $j=1$, $Y_1=X_k$.
      $\Xi_1=\Lambda_k\preceq d_r C$.
  \\
  \While{$\| E_j\|>\epsilon$ or $j\le j_{\max}$}{ 
  Compute $
E_j=-A Y_j C+B+Y_j\Xi_j.$\\
 Compute $Z=Z_j$ in $T_{Y_j} St(n,r)$ from 
\beqq\label{eq190}
\Pj_{Y_j}\{A  (\Pj_{Y_j} Z) C- (\Pj_{Y_j}  Z)\mbs(\Xi_j) \}=\Pj_{Y_j} E_j.
\eeqq\\
Use the step-length search:  \beqq 
Y_{j+1}=P_\cM(Y_j+\alpha  Z_j)
\eeqq for some $\alpha>0$, which meets the sufficient decrease condition of $\mbf$ in (\ref{eq185'}).\\
Update $
\Xi_{j+1}=({Y_{j+1}}^\top (A Y_{j+1}C-B))_{sym}
$, and
     $j\to j+1$.
  }
  \caption{Riemannian Newton methods to  solve   \textbf{ St(4r,r)-problem}
 in (\ref{eq170})}
\end{algorithm}

 \section{Numerical Simulations}

 We present one experiment on  the $r$-way classification with partial labeling to illustrate 
the effectiveness of SSM.

\subsection{Spectral graph embedding  with partial labeling}\label{sec4.2}

Introduce the graph $\cG=(\cV,\cE,\cW)$, where
 $\cV = \{v_1, v_2,\ldots , v_{m'}\}$ is  the  vertex set,  $\cE$ is the edge set, and $\cW$ is the weight matrix  
with non-negative  entries $w_{ij}$ representing the weights between vertices  $v_i$ and $v_j$. 
Assume that the graph is symmetric, i.e., $w_{ij} = w_{ji}$.  
For each vertex $v_i$, 
define the degree $w_i = \sum_{j=1}^{m'} w_{ij}$.
For each subset $\cS$ in $\cV$, we can compute  the volume $vol(\cS)$ of $\cS$ and
 the cut 
$cut(\cS)$,
\beqq
vol(\cS)=\sum_{i\in \cS} w_i,\; 
cut(\cS)=\sum_{\{i,j\}\in \cE(\cS)} w_{ij},\; \cE(\cS):=\{\{i,j\} : v_i\in \cS, v_j\in \cS^c \},
\eeqq
where $\cS^c$ denotes  the complement of $\cS$. 
The conductance $\phi(\cS)$\cite{chung_spectral_1997}\cite{Chung2007}  is defined as
\beqq\label{eq182}
\phi(\cS):=\frac{cut(\cS)}{ \min\{ vol(\cS),vol(\cS^c)\}}. 
\eeqq
In the context of $r$-class classification, our goal is to   find an optimal  partitioning $\cS_1\cup\cS_2\cup\ldots\cup \cS_r=\cV$ that minimizes the conductance,
\beqq \label{eq_5}
\min_{\cS_i} \sum_{i=1}^r\phi(\cS_i).
\eeqq
For each vertex $v_i$,  we  assign a label $y_i$ from  the set of  $r$ standard basis vectors $\{e_1, e_2,\ldots , e_r\}$. 
Each standard basis vector $e_i$ is of the form  $[0,\ldots 0,1,0,\ldots, 0]$,  a one-hot row vector. 
Consequently,  the set $\cS_i$ consists of the vertices that share the same  label $y_i$.

 For simplicity, we ignore the denominator of $\phi(\cS)$, 
 let $c:=[c_1, \ldots, c_r]$ denote the cardinality vector, and 
 consider the cardinality-fixed minimization problem
\beqq \label{eq_6}
\min_{\cS_i} \sum_{i=1}^r cut(\cS_i),  \textrm{ subject to } |\cS_i|=c_i\in \IZ,\; i=1,\ldots, r.
\eeqq
The optimal partitioning (labeling) in (\ref{eq_6}) can be described by 
 the binary matrix  
\beqq
Y:=
\left[
\begin{array}{c}
y_1   \\
 \vdots  \\
  y_{m'}  
\end{array}
\right] \in \IZ^{m'\times r},
\eeqq
where  the $i$-th column of $Y$ is the indicator vector of $\cS_i$, and the column sum of   $Y$
is  $[c_1,\ldots, c_r]$.
The spectral approximation of  (\ref{eq_6}) can be computed via relaxing the min-cut problem  (\ref{eq_6}) over binary matrices to  real matrices. 
 Let  $L$ denote  the combinatorial graph Laplacian,  \beqq
 L=\diag([w_1,\ldots, w_{m'}])-\cW.
 \eeqq
An embedding of the labeling  of  vertices  is given by  the eigenfunctions $X$  corresponding to the smallest nontrivial eigenvalues, 
\beqq\label{224_'}
\min_{X\in \IR^{m'\times r}}\langle X, L X\rangle,\; X^\top X= \diag( c).
\eeqq

Consider the  semi-supervised learning task, where  a set of pre-specified labeled data is available:
The first $m$ vertices $\cV_l:= \{v_1, v_2,\ldots , v_m \}$ are assigned labels $\{y_1, y_2,\ldots , y_m\}$, where $0 < m \ll m'$.
  Let $n$ denote the number of unlabeled vertices, $n = m'-m$. The semi-supervised learning task is to smoothly propagate the labels over the unlabeled vertices $\cV_u := \{v_{m+1}, v_{m+2},\ldots , v_{m'}\}$.   Write \beqq\label{LX} 
L = \left[\begin{smallmatrix}
L_{l,l} & L_{l,u}\\
L_{u,l} & L_{u,u}
\end{smallmatrix}\right],\; Y=
 \left[\begin{smallmatrix}
Y_{l} \\
Y_{u}
\end{smallmatrix}\right],\; X=\left[
\begin{array}{c}
X_l\\
X_u
 \end{array}
\right].
\eeqq
where subscripts $l$ and $u$ correspond to labeled and unlabeled indices, respectively. 
 Let  $X_l$ represent the \textbf{labeled vertices} with cardinality $m$, i.e., $X_l=Y_l$. 
 Observe that $X$ lies in 
  the   constraint set
\beqq\label{cX}
\cX:=\{X\in \IR^{n\times r}: 1_n^\top X=c^\top, X^\top X=X_l^\top  X_l+X_u^\top X_u =  \diag(c)
\}.
\eeqq
The following proposition states that the unknown matrix  $X_u$ can be computed using a quadratic minimization problem, i.e.,  (\ref{eq189}). With additional algebraic substitutions, SSM can be utilized to solve for $X_u$, as elaborated in Remark~\ref{last}.

\begin{prop} \label{prop 1.1}
Introduce   $c,\cX$ and $ X,L$  as described above. Let $X_l\in \IR^{m\times r}$ be a binary matrix. Suppose $X$ is 
 a minimizer of 
 \beqq\label{224_'}
\min_{X\in \IR^{n\times r}}\{ \langle X, L X\rangle:  X\in \cX\}.
\eeqq
Let    $c_u=Y_u^\top 1_n$.  Introduce 
$Z_0=n^{-1}1_{n}  c_u^\top$ and
$ P= I_n-n^{-1} 1_{n} 1_{n}^\top$. Let 
  \beqq \label{AB'}
 A=PL_{u,u} P,\; B=P(  L_{u,u} Z_0+ L_{u,l} X_l),
 C=\diag(c) -X_l^\top X_l -Z_0^\top Z_0.
 \eeqq
 Then  
 $X_u$ is given by 
\beqq \label{eq191}X_u=Z_u  +  Z_0,\; Z_0:=n^{-1}1_{n}  c_u^\top,\eeqq
where $Z_u$ is a minimizer of 
 \beqq\label{eq189}
 \min_{Z}
 \left\{ \mbf(Z):=\langle Z, A Z \rangle+
 2 \langle Z , B  \rangle: \; Z^\top Z=C,\; 1_n^\top Z=0\right\}.
 \eeqq
\end{prop}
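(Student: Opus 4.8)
The plan is to exhibit the substitution $X_u = Z_u + Z_0$ as an affine bijection between the feasible set of the constrained minimization over $\cX$ and the feasible set $\{Z : Z^\top Z = C,\ 1_n^\top Z = 0\}$ of the reduced problem in (\ref{eq189}), and to show that along this map the two objectives differ only by a constant independent of $Z_u$. Minimizers then correspond, which yields (\ref{eq191}).

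First I would reduce the objective to a function of $X_u$ alone. Writing $L$ and $X$ in the block form of (\ref{LX}) and expanding $\langle X, LX\rangle = tr(X^\top L X)$, the symmetry $L_{l,u} = L_{u,l}^\top$ merges the two cross terms, while $tr(X_l^\top L_{l,l} X_l)$ is a fixed constant because $X_l$ is a given binary matrix. Up to this constant, the objective equals $tr(X_u^\top L_{u,u} X_u) + 2\, tr(X_u^\top L_{u,l} X_l)$, with the orthogonality and affine constraints inherited from $\cX$, namely $X_u^\top X_u = \diag(c) - X_l^\top X_l$ (the unlabeled part of $X^\top X = \diag(c)$) and $1_n^\top X_u = c_u^\top$ (the unlabeled part of $1_{m'}^\top X = c^\top$, using $c_u^\top = c^\top - 1_m^\top X_l$).

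Next I would translate everything under $X_u = Z_u + Z_0$ with $Z_0 = n^{-1} 1_n c_u^\top$. Since $1_n^\top Z_0 = n^{-1}(1_n^\top 1_n)c_u^\top = c_u^\top$, the affine constraint becomes $1_n^\top Z_u = 0$, i.e. $P Z_u = Z_u$ and $Z_u^\top 1_n = 0$. Expanding $X_u^\top X_u = Z_u^\top Z_u + Z_u^\top Z_0 + Z_0^\top Z_u + Z_0^\top Z_0$ and using $Z_u^\top Z_0 = n^{-1}(Z_u^\top 1_n)c_u^\top = 0$, the orthogonality constraint collapses to $Z_u^\top Z_u = \diag(c) - X_l^\top X_l - Z_0^\top Z_0 = C$. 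For the objective, substituting $X_u = Z_u + Z_0$ and dropping the terms free of $Z_u$ leaves $tr(Z_u^\top L_{u,u} Z_u) + 2\, tr(Z_u^\top(L_{u,u}Z_0 + L_{u,l}X_l))$; then, because $P^\top = P = P^2$ and $P Z_u = Z_u$, I may insert $P$ on both sides without changing any value to obtain $tr(Z_u^\top P L_{u,u} P\, Z_u) + 2\, tr(Z_u^\top P(L_{u,u}Z_0 + L_{u,l}X_l)) = \langle Z_u, A Z_u\rangle + 2\langle Z_u, B\rangle = \mbf(Z_u)$, with $A,B$ as in (\ref{AB'}).

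This insertion of $P$ is the one step that requires genuine care: the identifications $A = P L_{u,u} P$ and $B = P(L_{u,u}Z_0 + L_{u,l}X_l)$ are justified only on the constraint set, where $Z_u$ lies in the range of $P$. With this in hand, $Z_u \mapsto X_u = Z_u + Z_0$ is an affine bijection of the two feasible sets along which the objectives agree up to the fixed constant, so $X_u$ minimizes the problem over $\cX$ if and only if $Z_u = X_u - Z_0$ minimizes (\ref{eq189}), establishing (\ref{eq191}). I expect no analytic difficulty here; the main obstacle is purely the bookkeeping of the block expansion, the isolation of the constant terms, and the vanishing of the cross terms $Z_u^\top Z_0$ and $Z_0^\top Z_u$.
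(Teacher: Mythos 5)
Your proposal is correct and follows essentially the same route as the paper's proof: the paper likewise shifts by $Z_0$ (writing $X_u' := X_u - Z_0$), observes that the constraints become $1_n^\top X_u' = 0$ and ${X_u'}^\top X_u' = C$ with the cross terms $Z_0^\top X_u'$ vanishing, reduces the objective to $\langle X_u', L_{u,u} X_u'\rangle + 2\langle X_u', L_{u,u}Z_0 + L_{u,l}X_l\rangle$ up to a constant, and finishes by writing $X_u' = PZ_u$ to insert the projections, exactly your ``insertion of $P$ on the constraint set'' step. Your write-up simply makes explicit the block expansion and cross-term cancellations that the paper leaves implicit.
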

\begin{proof}
Introduce  $ X_u':=X_u- Z_0$ to
reformulate  (\ref{224_'}). From (\ref{cX}),
  $X_u'$  satisfies  the constraints:   $1_n^\top X_u'=0$, and
 \beqq
 \diag(c)=X^\top X=X_l^\top X_l+ {X_u'}^\top X_u'+Z_0^\top Z_0.
 \eeqq
 Introduce  $A':=L_{u,u}$ and $B':= L_{u,u} Z_0+ L_{u,l} X_l$. Then 
  $X_u'$ is a minimizer of 
    the problem
 \beqq
 \min_{X_u'}
 \left\{ \langle X_u', A' X_u'\rangle+
 2 \langle X_u', B'\rangle:
 1_n^\top X_u'=0,  \; {X_u'}^\top X_u'=C
 \right\}.
 \eeqq

The proof is completed by expressing $X_u'$ as $X_u'=PZ_u $.
\end{proof}
 
 \begin{rem} \label{last} 
 We want to highlight two observations.  First, $L$ possesses a null vector $1_{m'}$, which does not provide any  geometric information about the data graph.  Therefore, we decompose $X_u$ into  $X_u=X_u'+Z_0$, ensuring the  columns of $X_u'$ are orthogonal to $1_n$.  Second, 
we can   effectively enforce the constraint $1_n^\top Z=0$ during  the computation of $Z$ by selecting  the subspaces in SSM  to be orthogonal to $1_n$.  To satisfy this constraint, we incorporate  the projection $P$ in the formation of $A$ and $B$, as shown  in (\ref{AB'}).  However, 
 we cannot directly employ SSM to compute $Z_u$ based on  Prop.~\ref{prop 1.1}, since $C$ is not positive definite. 
 Indeed,  since $Y^\top Y=\diag(c)$ and $Z_0=n^{-1}1_n Y_u^\top  1_n^\top $, then 
 \beqq
 C=Y^\top Y-X_l^\top X_l-Z_0^\top Z_0=Y_u^\top Y_u-Z_0^\top Z_0
 =Y_u^\top Y_u-Y_u^\top  (n^{-1}1_n 1_n^\top) Y_u=Y_u^\top P Y_u \succeq 0.
 \eeqq
Besides,  since $C$ has a null vector $1_r$,
  the matrix $C$ in (\ref{AB'})  is positive semidefinite with  rank at most  $r-1$. 
    To proceed, we make further substitutions to achieve the standard form for SSM, i.e., (\ref{main_P}).
Let us assume that the rank of $C$ is $r'$. Consider the eigen-decomposition of $C$, \beqq
C=\widetilde Q \widetilde C \widetilde Q^\top,\;\textrm{ with isometric } \widetilde Q\in \IR^{r\times r'}, \textrm{ diagonal }\widetilde C\in \IR^{r'\times r'}.
\eeqq
Introduce  \beqq \widetilde Z:=Z_u \widetilde Q \widetilde C^{-1/2},\; \widetilde B:=B \widetilde Q\in \IR^{n\times (r-1)}
  \eeqq  to eliminate $Z_u$ in (\ref{eq189}).  Then $\widetilde Z$ is a minimizer of the problem
   \beqq\label{228_}
 \min_{\widetilde Z}
 \left\{ \langle \widetilde Z, A \widetilde Z \widetilde  C\rangle+
 2 \langle \widetilde Z ,  \widetilde  B \widetilde C^{1/2}\rangle: \; \widetilde Z\in St(n,r')\right\}.
 \eeqq
Employing SSM on the quadratic minimization in (\ref{228_}) to compute $\widetilde Z$, we can obtain 
\beqq\label{eq_199}
X_u=\widetilde Z \widetilde C^{1/2}\widetilde Q^\top  +Z_0.
\eeqq
\end{rem}

\subsection{Experimental setup}

We evaluate SSM on synthetic and three image datasets: MNIST \cite{Deng2012}, Fashion-MNIST \cite{Xiao2017} and CIFAR-10 \cite{Krizhevsky2009}. On CIFAR-10 we preprocess images using a pre-trained autoencoder as a feature extractor. The autoencoder architecture, loss, and training, were derived from the AutoEncodingTransformations architecture from \cite{Zhang2019}, with all the default parameters from their paper, and we normalized the features to unit vectors. We also evaluate SSM on a variety of real-world networks including the Cora citation network, which demonstrates that our method generalizes beyond $k$-NN graphs. In this benchmark,  the vertices of the graph represent documents and their links refer to citations between documents, while the label corresponds to the topic of the document.

\subsubsection{Synthetic datasets}\label{4.2.1}

% red part%
%{\red Please confirm $m=10$? or $ m=5$?  Please help me to highlight those incorrect places. Also  It is an excellent  idea to add the column of $d_r-d_1$ to the table.}
To illustrate the concept of semi-supervised learning, we apply SSM to a small synthetic benchmark. We consider three concentric circles centered at the origin with radii $1$, $2$, and $3$ for each circle, we randomly sample $2000$ points, normally distributed with standard deviation $0.2$. We construct a graph over the data. The graph was constructed as a $k$-nearest neighbor graph with Gaussian edge weights given by
$w_{ij} =\exp\left( -4||x_i-x_j||^2/d_k(x_i)^2 \right)$ where $d_k(x_i)$ is the distance between $x_i$ and its $k^{\rm th}$ nearest neighbor. We used $k=10$ in all experiments and symmetrize $W$ by replacing $W$ with $\frac{1}{2}(W+W^\top)$. From each circle, we uniformly randomly select $5$ points to use as supervision and then construct matrices $A$, $B$, and $C$ as stated in Prop.~\ref{prop 1.1}  and run SSM to get predictions. Figure~\ref{fig:circles}  illustrates this procedure. Since there are three classes and five labeled points are selected from each class, 
the total number of labeled vertices in this synthetic experiment is \(m=15\).

\begin{figure}[th!]
\centering
%\begin{subfigure}[b]{0.3\linewidth}
\includegraphics[width=0.3\linewidth]{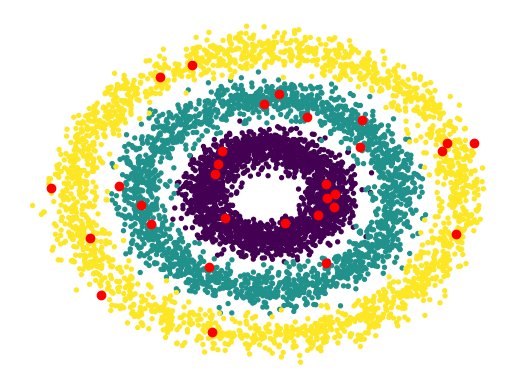}
%\subcaption{data distribution}
%\label{fig:circlesa}
%\end{subfigure}
%
%\begin{subfigure}[b]{0.3\linewidth}
\includegraphics[width=0.3\linewidth]
{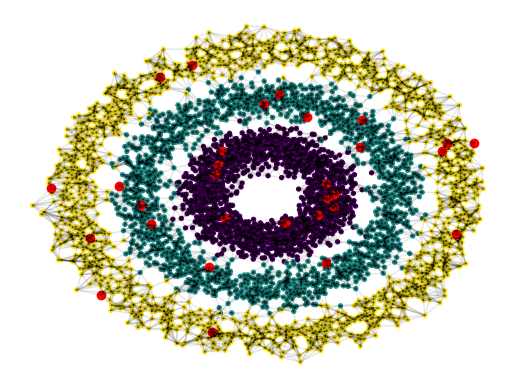}
%\subcaption{$k$-NN graph on data}
%\label{fig:circlesb}
%\end{subfigure}
%
%\begin{subfigure}[b]{0.3\linewidth}
\includegraphics[width=0.3\linewidth]
{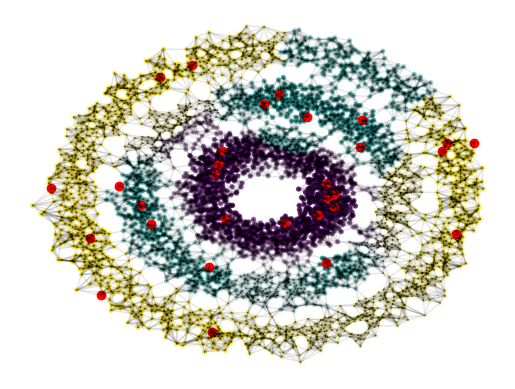}
%\subcaption{predictions on data distribution}
%\label{fig:circlesc}
%\end{subfigure}
%
\caption{Left to right: 
%\textbf{(\subref{fig:circlesa})}
(a) Data distribution colored by class. 
%\textbf{(\subref{fig:circlesb})} 
(b) Data distribution colored by class, including edges of the $k$-NN graph with $k = 10$. %\textbf{(\subref{fig:circlesc})} 
(c) Data distribution colored by model predictions. The shade of the color is given by the norm of the associated row in $X$.}
\label{fig:circles}
\end{figure}

\subsubsection{Image datasets} 
We construct a graph over the pixel space.
 We used all available data to construct the graphs for MNIST, Fashion-MNIST, and CIFAR-10. The graph was constructed as a $k$-nearest neighbor graph with Gaussian edge weights as in Section~\ref{4.2.1}.
 To construct $Y_l$, we uniformly sample $1$ vertex for each of the $5$ classes, i.e. $m = 5$.

\subsubsection{Citation networks} We also evaluate the Cora citation network, demonstrating that our method generalizes beyond $k$-NN graphs. In this benchmark, the graph is given. The vertices represent documents and their links refer to citations between documents, while the label corresponds to the topic of the document.

\subsection{Numerical Results}

We compare SSM with R-Trust Region, R-Conjugate Gradient, R-Gradient, Landing, and RSDM on the reduced Stiefel problem obtained from Proposition~\ref{prop 1.1} and Remark~\ref{last}. 
For a fair comparison,  all methods start with the same initialization $\cP_\cM (V_g V_g^\top B)$ to avoid crashing at potential saddles. \footnote{Random initialization produces poor objective values in the methods:  R-TRUST REGION, R-CONJUGATE GRADIENT, R-GRADIENT, RSDM, and LANDING. We omit these results. }
%All methods use the same initialization \(\mathcal P_{\mathcal M}(V_gV_g^\top B)\). 
We report the final objective value, first-order residual 
\[
    \|AXC-B-X\Lambda_{\rm sym}\|,
\]
the number of \(L^\dagger x\) calls, wall-clock runtime, and classification accuracy. The behavior of the objective value and the first-order residual of these methods is reported in Fig.~\ref{fig:convergence_time} and Fig.~\ref{fig:grad_time}, respectively. 

\begin{figure*}[t!]
    \centering
    \includegraphics[width=0.57\textwidth]{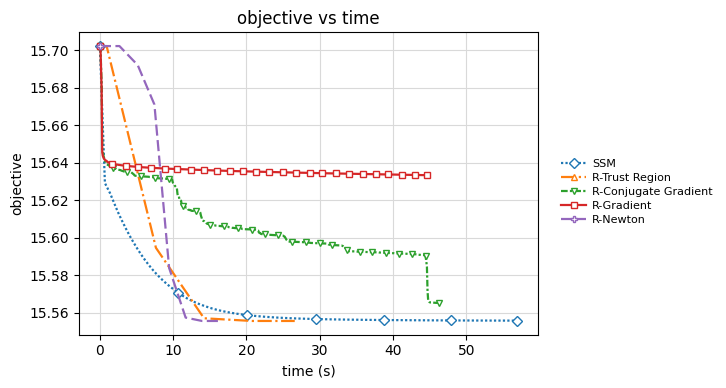}
    \includegraphics[width=0.42\textwidth]{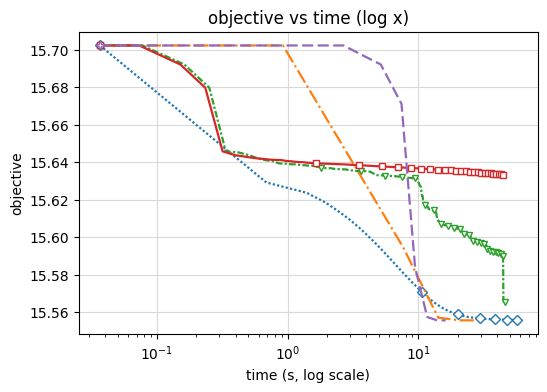}
    \caption{ Objective value versus wall-clock time on the MNIST semi-supervised graph problem.  All methods use the same initialization \(X_1=\mathcal P_{\mathcal M}(V_gV_g^\top B)\).  \textbf{Left:} linear time scale.  \textbf{Right:} logarithmic time scale, which emphasizes early-time behavior.  SSM, R-Trust Region, and R-Newton reach the lowest objective range in the plotted time window, whereas R-Conjugate Gradient and R-Gradient remain at higher objective values. }
    \label{fig:convergence_time}
\end{figure*}

\begin{figure*}[t!]
    \centering
    \includegraphics[width=0.57\textwidth]{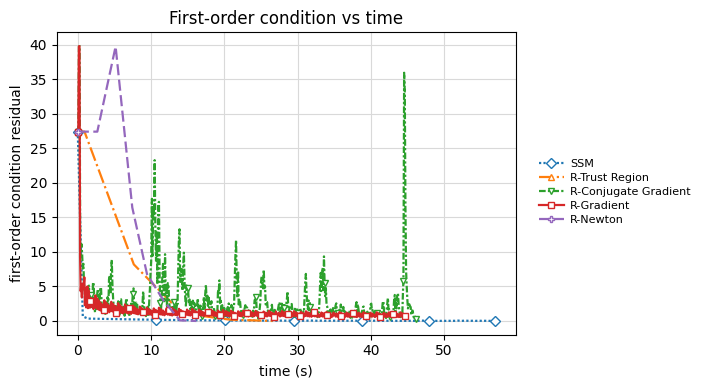}
    \includegraphics[width=0.42\textwidth]{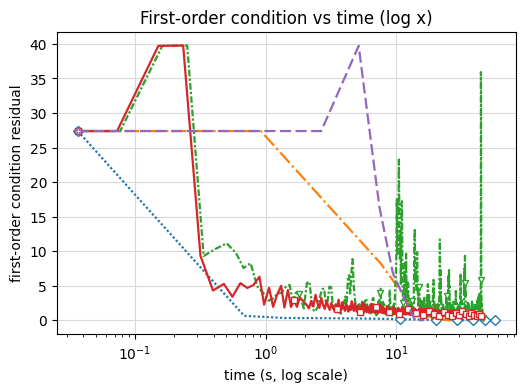}
    \caption{ First-order residual versus wall-clock time on the MNIST semi-supervised graph problem.  \textbf{Left:} linear time scale.  \textbf{Right:} logarithmic time scale, which emphasizes rapid, early-time convergence behavior of SSM.  }
    \label{fig:grad_time}
\end{figure*}
{Figure~\ref{fig:convergence_time} compares wall-clock convergence on the MNIST semi-supervised graph problem. 
 SSM reduces the objective immediately from the common initialization and reaches the lowest objective range attained in the experiment. 
R-Trust Region and R-Newton also attain comparable final objective values, but their curves show a different time profile, with a short initial plateau followed by a sharp decrease. 
R-Conjugate Gradient decreases more gradually and remains above the best final objective values, while R-Gradient stagnates at a substantially larger objective. 
The logarithmic time plot highlights these early-time differences.}

Table ~\ref{tab:imagedatasets} compares SSM with the baseline methods, including R-TRUST REGION, R-CONJUGATE GRADIENT, R-GRADIENT, RSDM, and LANDING
  to solve $X_u$  from (\ref{eq_199}). \footnote{ The code and experiments can be found at
https://github.com/choltz95/SSM-on-stiefel-manifolds.}
\begin{itemize}
\item SSM is Algorithm~\ref{SSM_alg} in section~\ref{sec:3}.
\item R-TRUST REGION is the trust region method described in section 6.4\cite{boumal_introduction_2022}.  
\item  R-CONJUGATE GRADIENT is the Riemannian conjugate gradient method proposed in\cite{Sato2022}.
\item  R-GRADIENT is the iterative method given  in (\ref{eq_97}). 
\item {LANDING is the infeasible landing method for optimization under orthogonality constraints; it uses cheaper iterations that are attracted toward the Stiefel manifold rather than enforcing feasibility exactly at every step~\cite{ablin2022Landing}.}
\item {RSDM is the Randomized Riemannian Submanifold Method of Han, Poirion, and Takeda~\cite{HanPoirionTakeda2025RSDM}; it restricts each Riemannian update to a randomly selected low-dimensional submanifold.}
\end{itemize}

The computation of $V_g$ is necessary to compute both the initialization of SSM and is used to compute the sequence of subspaces. We employ  locally optimal block preconditioned conjugate gradient methods (LOBPCG)\cite{Knyazev2001}, a matrix-free algorithm with Jacobi preconditioning to compute the principal eigenvectors of $A$, $V_g$ in nearly linear time.  
For instance,   
we compute the smallest  four eigenvalues and corresponding eigenvectors of $A$   to generate $V_g$ with rank $4$  for the MNIST dataset.
%To make the problem challenging, we choose $m=5$, which yields a small norm of $B$.
Here, the first $6$ positive eigenvalues of $A$ for the MNIST dataset are
\begin{eqnarray}
&& d_1 = 4.9439\times 10^{-4},\:\: d_2 = 9.7915 \times 10^{-4},\:\:   d_3 = 1.1816\times 10^{-3},\\
&& d_4 =  1.2345\times 10^{-3},\:\: d_5 = 2.1650\times 10^{-3},\:\: d_6 = 2.4785\times 10^{-3}.
\end{eqnarray}

\begin{table*}[h!]
\caption{Numerical experiments for image datasets (MNIST, FashionMNIST, and CIFAR-10) and a citation network (CORA) at 1 label rate.
MNIST: $m=r=5, n=35683$.
FashionMNIST: $m=r=5, n=35000$.
CIFAR-10:  $m=r=5, n=30000$.
 CORA :  $m=r=7, n=2708$ 
}
\label{tab:imagedatasets}
\begin{center}
\begin{small}
\begin{sc}
\begin{tabular}{llllll}
\toprule
MNIST  &$\mbf(X)$&$|| AXC - B-X\Lambda_{sym}||$&\# evaluations $L^{\dagger} x$ &runtime (s) & accuracy \\
\midrule
SSM & 15.57 & 0.0 & 19 &  33.1 & 97.61\%   \\
R-Trust Region & 15.57 & 0.0 & 43  &  97.3  &  97.61\%   \\
R-Conjugate Gradient & 25.08 & 0.07 & 0 &  72.3 & 75.31\%    \\
R-Gradient & 25.08 & 0.09 & 0 &  117.9  & 75.22\%   \\
Landing & 24.49 &1.13 & 0 & 27.9 & 75.34\%\\
RSDM & 28.41 & 12.4 & 0 & 93.2 & 74.18\%\\
\bottomrule
\toprule
FashionMNIST  & & & & &\\
\midrule
SSM & 18.61 & 0.0 & 13 &  74.3  & 53.57\%   \\
R-Trust Region & 18.61 & 0.0 & 21 &  231.4  & 53.57\%   \\
R-Conjugate Gradient & 29.81 & 0.03 & 0 &  93.4 &  21.53\%   \\
R-Gradient & 29.68 & 0.02 & 0 &  87.0 &  29.97\%   \\
Landing & 29.15 &1.25 & 0 & 63.0 & 30.1\%\\
RSDM & 33.75 & 13.5 & 0 & 120.5 & 20.5\%\\
\bottomrule
\toprule
CIFAR-10   &  & & & & \\
\midrule
SSM & 52.76 & 0.0 & 41 & 136.4 & 52.76\%     \\
R-Trust Region & 52.76 & 0.0 & 64 &  397.4  & 52.76\%   \\
R-Conjugate Gradient & 74.83 & 0.01 & 0 &  177.3 &  39.41\%   \\
R-Gradient & 74.61 & 0.02 & 0 &  162.1 &  33.96\%    \\
Landing & 72.95 &1.95 & 0 & 116.0 & 40.0\%\\
RSDM & 84.80 & 21.5 & 0 & 228.5 & 38.0\%\\
\bottomrule
\toprule
CORA     &  & & & & \\
\midrule
SSM & 104.37 & 0.0 & 23 & 48.3 & 61.31\%     \\
R-Trust Region & 104.37 & 0.0 & 49 &  102.6  & 61.31\%   \\
R-Conjugate Gradient & 213.18 & 0.03 & 0 &  72.1 &  37.11\%   \\
R-Gradient & 213.18 & 0.07 & 0 &  79.10 &  37.03\%    \\
Landing & 207.8 &3.30 & 0 & 41.0 & 37.4\%\\
RSDM & 241.6 & 36.1 & 0 & 92.9 & 35.8\%\\
\bottomrule
\end{tabular}
\end{sc}
\end{small}
\end{center}
\end{table*}

%\subsubsection{Large-scale networks} 

%To demonstrate the scalability of our method, we provide results at 1-label rate for OGB networks Arxiv and Products. These networks contain up to 2 million vertices and $47$ classes.

Table ~\ref{tab:imagedatasets} indicates that a better classification accuracy is obtained,  if a lower objective is reached.
Compared to the methods R-CONJUGATE GRADIENT and R-GRADIENT, both SSM and R-TRUST REGION can reach a lower objective value, see the column of $\mbf(X)$.
  On the other hand, even though these methods are guaranteed to monotonically reduce the objective via line search, SSM rapidly converges to a qualified critical point, whereas the R-GRADIENT method fails to converge even after hundreds of iterations. 
See the column $\|AXC-B-X\Lambda_{sym}\|$ for the norm of the first-order condition.  Regarding R-CG method, possibly due to nearly singular or indefinite Hessian structure, we observe spikes in the first-order residual, even though the line search mechanism ensures that the objective function continues to decrease.

%We show in Fig.~\ref{fig:foc} that the choice of subspaces plays a critical role in the rate and quality of convergence of our method (compared to first-order methods). 
One may also ask how our method compares to traditional second-order methods (i.e., Riemannian Trust-region). 
The runtime column reports the wall-clock cost. In the reported runs, SSM is faster than R-Trust Region while attaining comparable or lower objective values.   The $L^\dagger x$ column reports the number of calls to the conjugate gradient solver for each method (to compute the Newton direction). 
In theory, SSM employs a special set of vectors to estimate the Hessian information to update the search direction via subspace minimization. As a result, the  Hessian information estimated from SSM is usually better than the Hessian estimated from CG or BFGS methods typically used for trust-region type approaches. Take  MNIST as one example. The smallest eigenvalue of Hessian in SSM is about $1.3\times 10^{-3}$ near the critical point, while the smallest eigenvalue of Hessian in R-TRUST REGION is less than $2.05 \times 10^{-5}$.
%
%
%In Figure \ref{fig:foc}, we demonstrate the convergence behavior of SSM and the projected gradient method discussed previously by plotting the norm of the first order condition (FOC): $||AX_k C - B -X_k \Lambda_k ||$. Note that while both methods are guaranteed to monotonically reduce the objective via line search, SSM rapidly converges to a critical point, while the projected gradient method fails to converge, even after hundreds of iterations.

% and the OGB~\cite{Hu2020} Arxiv and Product networks. ({\textcolor{red}{Do we want to run this experiment?}})

\subsection{Complexity of SSM and outlook}
\label{sec:complexity}

Finally,  we briefly analyze the computational cost of SSM, which is dominated by the SQP routine 
to compute the SQP directions $Z$ (Newton directions). 
The SQP direction $Z$ is the solution to the system characterized by the linearization of the first-order optimality conditions. Namely, within each iteration of our procedure, we compute the Lagrangian multipliers as well as the SQP update for $X$. As in Newton's method for unconstrained problems, SQP-based methods necessitate the computation of inverse-vector products involving symmetric PSD linear systems. 
Assume that by exploiting the sparsity of the graph Laplacian  $L$, vector-vector and matrix-matrix multiplication can be done in linear time. The primary overhead of our method lies in the computation $Z$, which necessitates the calculation of the solution to a specific linear system in the subspace orthogonal to $\text{span}\{ V_g\}$. We empirically show that this linear system is well-conditioned.  In summary, SSM is an efficient tool for computing a qualified critical point of large-dimensional problems in (\ref{main_P}).

 Finding global minimizers in nonlinear optimization often presents significant challenges. In this study, we introduce a novel convex relaxation approach for quadratic minimization on Stiefel manifolds, which significantly improves the quality of solutions. It  raises an interesting question: can similar  results/algorithms be achieved  in a broader context involving nonsmooth objectives? Specifically, we wonder if proximal algorithms\cite{Beck}  can be adapted for use  on Stiefel manifolds. This topic will be explored in future research.

\subsection{Statements and Declarations}

All authors declare no conflicts of interest.

\appendix
\section{Auxiliary propositions}
\subsection{Proof of Necessary conditions for local minimizers}
Here is  the proof of Prop.~\ref{suf_cond}.
Observe that $St(n,r)$ is one sub-manifold of $\cO(n)$. 
According to  (\ref{T_X}),
 we can  introduce a  differentiable curve $\rho(t)$ 
 \beqq\label{eq36}
\rho(t)=[X, X_\bot]\exp(t \Omega) I_{n,r},\; \Omega:=\left(
\begin{array}{cc}
\Delta_0, & -\Delta_1^\top\\
 \Delta_1,&  0
\end{array}
\right)
\eeqq
passing $X$ on $St(n,r)$,
where  $\Delta_1\in \IR^{r\times (n-r)}$ is
a nonzero matrix and $\Delta_0\in \IR^{r\times r}$ is a
 skew-symmetric matrix. 
Indeed,  $\rho(t)^\top \rho(t)=I_{r}$ holds, i.e.,  $\rho(t)\in St(n,r)$ and $\rho(0)=X$, $\rho'(0)=X\Delta_0+X_\bot \Delta_1$.
The following geometric viewpoint indicates that eigenvalues of $\Lambda C^{-1}$ should 
be bounded above by $d_r$.

%A natural question is whether we can effectively compute   the stationary  points with   minimal objective values. 

%Introduce 
%a set  \beqq
%\cS:=\{X: d_2 I \succeq X^\top AX, P^\bot A P^\bot \succeq d_2 P^\bot\} \in St(n,2).
%\eeqq 
%Note that $[v_1, v_2]\in \cS$.
%Note that  the Stiefel manifold $St(n,2)$.
% Without $X^\top B$ symmetric constraint, the optimal condition is \[ AX=X\Lambda+B\] for some possible $\Lambda$.

%
%By Courant-Fischer min-max theorem(Theorem 4.2.11\cite{horn_matrix_1985-1}),  since $V$ has $r$ columns, then  $d_r$ is a lower bound for  the maximal eigenvalue of $V^\top AV$
%in (\ref{eq59}). Thus Prop.~\ref{exist}  actually implies
%$\gamma_1\le d_r$. In summary, we have the following Theorem.  

%\begin{proof}  
 %with symmetric $X^\top B\succ 0$.
 From (\ref{eq36}), computation shows 
\begin{eqnarray}\label{eq32'}
&&\frac{d}{dt}\mbf(X(t))=\langle \rho'(t), A\rho(t) C\rangle- \langle \rho'(t), B\rangle\\
&&\frac{d^2}{dt^2}\mbf(X(t))=\langle \rho'(t), A\rho'(t) C\rangle+ \langle \rho''(t), A\rho(t)C-B\rangle,
\end{eqnarray}
and
\begin{eqnarray}
&&\rho'(0)=[X, X_\bot] \Omega I_{n,r}= X\Delta_0 +X_\bot\Delta_1,\\
&& \rho''(0)=[X, X_\bot] \Omega^2 I_{n,r}=X(\Delta_0^2 -\Delta_1^\top \Delta_1)+X_\bot \Delta_1 \Delta_0.
\end{eqnarray}
Then  $\mbf(\rho(t))$ is a strictly local minimizer  at $t=0$,    if and only if 
the following two optimal conditions hold.
First,   (\ref{eq32'}) indicates
 (\ref{1stC}), since
\beqq 
\frac{d}{dt}\mbf(\rho(t))|_{t=0}=\langle AXC-B, X\Delta_0 +
X_\bot \Delta_1 
\rangle=0\eeqq
holds for any matrix $\Delta_1 $ and any skew-symmetric matrix  $\Delta_0$. 
Second, let 
 $Y:=X\Delta_0 +X_\bot\Delta_1$. We should have
\begin{eqnarray}
&&\frac{d^2}{dt^2}\mbf(\rho(t))|_{t=0}=\langle \rho''(0), A\rho(0)C-B\rangle+ \langle \rho'(0), A\rho'(0) C\rangle\\
%&=&\langle 
%-X\Delta_0^\top  \Delta_0
%-X\Delta_1^\top  \Delta_1
%-X_\bot \Delta_1^\top \Delta_0
%, AXC-B\rangle\\
%&&+ 
%\langle (-X\Delta_0^\top -X_\bot\Delta_1^\top) , A (-X\Delta_0^\top  -X_\bot\Delta_1^\top)  C\rangle\\
&=&-\langle  Y^\top Y ,\Lambda\rangle+
\langle Y , A Y  C\rangle\ge 0,\label{eq35}
\end{eqnarray}
where we use the first order condition in (\ref{1stC}).
Now   set $\Delta_0=0$ and $\Delta_1=u_1 u_0^\top C^{-1/2} $, where $u_1$ is a unit 
 eigenvector of  $X_\bot^\top A X_\bot$ associated with eigenvalue $d^\bot_{min}$ and $u_0$ is a unit eigenvector of $C^{-1/2} \Lambda C^{-1/2}$.
 Then (\ref{eq35}) implies  (\ref{eq_36}).
%  when
%Clearly, when (\ref{eq_36}) holds at $X$, we have $F''(0)>0$.
%%Note that $X_\bot^\top A X_\bot\succeq d_2 I_{n-2, n-2}$ and $d_2 I_{2,2}\succeq X^\top A X$ hold for each $X\in \cS$. 
%When $X^\top B\succeq 0$, we have $\cF''(0)> 0$. On the other hand, 
%let $v$  be a unit eigenvector of $\Lambda$  associated with eigenvalues $\max(\lambda_1, \lambda_2)$.
%Let $u$ be a unit eigenvector of  $X_\bot^\top A X_\bot$ associated with eigenvalue $d^\bot_{min}$.
%Choose  $\Delta_1=v u^\top$. Then $\cF''(0)>0$ in (\ref{eq35}) yields (\ref{eq_36}).

%\end{proof}

\subsection{Maximizers of a quadratic maximization  on Stiefel manifold}

\begin{prop}\label{max_P} Consider the problem
 \beqq\label{problemX}
 \max_{X\in \IR^{r\times r'}} \mbf(X; A, B, C),\; X^\top X \preceq I_{r'},
 \eeqq
where $C\in \IR^{r'\times r'}$ is  positive definite with $r\ge r'$.
\begin{itemize}
\item Suppose 
 $A\in \IR^{r\times r}$ is  positive definite.
 Then any global maximizer  $\hat X$ must lie on $St(r,r')$.
 \item 
 Suppose $A$ is  positive semidefinite. Then there exists 
  $\hat X'\in St(r,r')$, that is a global maximizer. 
\end{itemize}
\end{prop}
\begin{proof} Let $\cD$ denote the set $\{ X: X^\top X \preceq I_{r'}\}$. 
Let  $\{ \gamma_1, \ldots, \gamma_{r'}\}$ be singular values of  the  global maximizer. Then   $\gamma_j\le 1$ for all $j$. 
Without loss of generality, let $\hat X\notin St(r,r')$ be a global maximizer  with
 $\gamma_1<1$.
%Let $Q\in \cO(r')$ be the right singular matrix in the SVD of $\hat X$. 
%Replacing  $X,B,C$ with  $XQ, BQ, Q^\top  CQ$ accordingly,  
Take the svd of $\hat X$,
 \beqq
\hat X=[  x_1, \ldots, x_{r'}] \diag(\gamma_1, \ldots, \gamma_{r'}) Q^\top,
\; \textrm{ with } [  x_1, \ldots, x_{r'}] \in St(r,r'),\; Q\in \cO(r').
\eeqq
Construct a path $\{ \mbg(t)\in \cD: \; t\in \IR\}$ through $\hat X$, where
 \beqq
\mbg(t)=[ x_1, \ldots, x_{r'}]\diag( [t, \gamma_2, \ldots, \gamma_{r'}])Q^\top.
\eeqq The constraint
$X^\top X\preceq I$ implies $t\in [-1,1]$.  Since
 $x_1^\top A x_1>0$ and $(Q^\top CQ)_{1,1}>0$,
the maximization of  $\mbf$ in (\ref{problemX}) 
on the interval reduces to  the  concave maximization of $t$.
The optimal value of $t$   is either  $1$ or $-1$.
Hence,  $\hat X$ is not a global maximizer.  This contradiction implies that   each maximizer  should be located on $St(r,r')$.

Suppose $A$ is positive semi-definite and there exists some global maximizer $\hat X\notin St(r,r')$. For each $\gamma_j<1$, 
repeat the  above process of replacing  $\hat X$ with $\mbg(1)$ or $\mbg(-1)$ to obtain the
 global maximum point  located on $St(r,r')$.
\end{proof}

 \subsection*{Acknowledgment} We thank the anonymous referees for valuable comments and suggestions that led to improvement of the original manuscript.
\bibliographystyle{alpha}
\bibliography{placement}

%
%\begin{thebibliography}{99}
%\bibitem{bertsekas2014constrained} 
% Bertsekas, D.P. 
%\newblock Constrained Optimization and Lagrange Multiplier Methods,
%\newblock Computer science and applied mathematics, 1982,
%\newblock Academic press.
%%
%%\bibitem{Bertsekas_book}
%%Dimitri P. Bertsekas
%%\newblock Constrained optimization and {L}agrange multiplier methods.
%%\newblock {\em Memo. AMS}, 137(653), 1999.
%\bibitem{Boyd10distributedoptimization}
%Stephen Boyd and Neal Parikh and Eric Chu and Borja Peleato and Jonathan Eckstein,
%\newblock Distributed Optimization and Statistical Learning via the Alternating Direction Method of Multipliers,
%\newblock 2010.
%\newblock Found. Trends Mach. Learn.,
%\newblock January 2011,
%\newblock 3(1),
%\newblock 1--122.
%
%\bibitem{Glaunes}
%J.~Glaunes, A.~Trouve, and L.~Younes.
%\newblock Diffeomorphic matching of distributions: A new approach for
%  unlabelled point-sets and sub-manifolds matching.
%\newblock {\em CVPR}, 2:712--718, 2004.
%
%\bibitem{Glaunes}
%J.~Glaunes, A.~Trouve, and L.~Younes.
%\newblock Diffeomorphic matching of distributions: A new approach for
%  unlabelled point-sets and sub-manifolds matching.
%\newblock {\em CVPR}, 2:712--718, 2004.
%
%\bibitem{PC} P. Chen,  a  novel kernel correlation model with the
%correspondence estimation, {\it JMIV} accepted.
%
%\end{thebibliography}
\end{document}